\documentclass[12pt]{amsart}

\setlength{\topmargin}{-0.5cm}
\setlength{\textwidth}{15cm}
\setlength{\textheight}{22.6cm}
\setlength{\topmargin}{-0.25cm}
\setlength{\headheight}{1em}
\setlength{\headsep}{0.5cm}
\setlength{\oddsidemargin}{0.40cm}
\setlength{\evensidemargin}{0.40cm}

\usepackage{amsmath} \usepackage{amssymb} \usepackage{amsthm}
\usepackage{amscd}
\usepackage[all]{xy}
\usepackage{xypic}
\usepackage{enumerate}
\usepackage{mathrsfs}
\usepackage{multirow}
\usepackage{bigdelim}
\usepackage[bookmarks=false]{hyperref}
\usepackage{lipsum}

\def\tb{\textbf}

\def\E{{\mathcal{E}}}

\def\G{{\mathcal{G}}}

\def\L{{\mathcal{L}}}

\def\O{{\mathcal{O}}}

\def\Z{{\mathbb{Z}}}
\def\N{{\mathbb{N}}}
\def\Q{{\mathbb{Q}}}
\def\R{{\mathbb{R}}}
\def\C{{\mathbb{C}}}

\def\ol{\overline}

\def\Pic{{\mathrm{Pic}}}

\theoremstyle{plain}
\newtheorem{thm}{Theorem}[section]
\newtheorem{cor}[thm]{Corollary}
\newtheorem{prop}[thm]{Proposition}
\newtheorem{conj}[thm]{Conjecture}

\newtheorem{lem}[thm]{Lemma}
\theoremstyle{definition}
\newtheorem{defn}[thm]{Definition}

\theoremstyle{remark}
\newtheorem{rem}[thm]{Remark}

\newtheorem*{acknowledgement}{Acknowledgments}

\begin{document}

\title{A note on Iitaka's conjecture $C_{3,1}$ in positive characteristics}
\author{Lei Zhang}
\address{Lei Zhang\\School of Mathematics and Information Sciences\\Shaanxi Normal University\\Xi'an 710062\\P.R.China}
\email{lzhpkutju@gmail.com, zhanglei2011@snnu.edu.cn}
\thanks{2010 \emph{Mathematics Subject Classification}: 14E05, 14E30.}
\thanks{\emph{Keywords}: Kodaira dimension; positive characteristics; weak positivity; minimal model.}
\baselineskip = 15pt
\footskip = 32pt

\maketitle
\begin{abstract}
Let $f:X\to Y$ be a fibration from a smooth projective 3-fold to a smooth projective curve, over an algebraically closed field $k$ of characteristic $p >5$. We prove that if the generic fiber $X_{\eta}$ has big canonical divisor $K_{X_{\eta}}$, then
$$\kappa(X)\ge\kappa(Y) + \kappa(X_{\eta}).$$\\
\end{abstract}
\section{Introduction} \label{section:intro}

Throughout this paper, a {\it fibration} means a projective morphism $f:X\to Y$
between varieties such that the natural morphism $\O_Y \to f_*\O_X$ is an isomorphism.

Let $X$ be a projective variety over a field $k$ and $D$ a $\mathbb{Q}$-Cartier divisor on $X$. The $D$-dimension $\kappa(X,D)$ is defined as
\[\kappa(X,D) =\left\{
\begin{array}{llr}
-\infty, \text{ ~~if for every integer } m >0, |mD| = \emptyset;\\
\max \{\dim_k \Phi_{|mD|}(X)| m \in \mathbb{Z}~\text{and}~m>0 \}, \text{ otherwise.}
\end{array}\right.
\]
If $X$ has a regular projective birational model $\tilde{X}$, the Kodaira dimension $\kappa(X)$ of $X$ is defined as $\kappa(\tilde{X}, K_{\tilde{X}})$
where $K_{\tilde{X}}$ denotes the canonical divisor. Kodaira dimension is one of the most important birational invariant in the classification theory.

Iitaka's conjecture states that
\begin{conj}\label{Iit-conj} Let $f:X\rightarrow Y$ be a fibration between smooth projective varieties over an algebraically closed field $k$, with $\dim X = n$ and $\dim Y = m$. Then
  $$C_{n,m}: \kappa(X)\geq \kappa(Y) + \kappa(X_{\eta})$$
where $X_{\eta}$ denotes the generic fiber of $f$.
\end{conj}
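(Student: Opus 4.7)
The plan is to attack $C_{n,m}$ by induction on $\dim Y$ (and, within that, on $\dim X$), in the spirit of Viehweg's program adapted to positive characteristic. First I would dispose of the degenerate cases: when $\kappa(X_\eta) = -\infty$ the inequality holds trivially, and when $\kappa(Y) = -\infty$ one must show $\kappa(X) \ge \kappa(X_\eta)$, which reduces to pulling back pluricanonical sections from the generic fibre after controlling the discrepancy between $K_X$ and $K_{X/Y}$.

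The heart of the argument is to reduce to the case where $Y$ is of general type and $K_{X_\eta}$ is big, and then to establish the inequality in that case. The reduction has two steps. Step (a): replace $X_\eta$ by its Iitaka fibration $X_\eta \to Z_\eta$, spread out to a tower $X \to Z \to Y$ so that the generic fibre of $X \to Z$ has big canonical divisor, and apply $C_{\dim X, \dim Z}$ together with $C_{\dim Z, \dim Y}$ inductively. Step (b): factor through the Iitaka fibration $Y \to W$ of the base to reduce to the case where the image is of general type, again applying the conjecture inductively on each intermediate fibration. Once both base and generic fibre are of general type, the key step is weak positivity of $f_*\omega_{X/Y}^{\otimes N}$ on $Y$ for sufficiently divisible $N$; combining with bigness of $K_Y$ then yields a big subsheaf of $f_*\omega_X^{\otimes N'}$ whose linear system has image of dimension $\dim Y + \kappa(X_\eta)$, which is the desired conclusion.

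The main obstacle in positive characteristic is the weak positivity theorem for $f_*\omega_{X/Y}^{\otimes N}$. Viehweg's characteristic-zero proof relies on Hodge theory, semistable reduction, and the good behaviour of Hodge bundles under base change, none of which transfers cleanly to characteristic $p$: Frobenius pullbacks can destroy positivity, moduli of fibres are pathological, and semistable reduction is only partially available. Existing positive-characteristic analogues (work of Patakfalvi, Ejiri, and others) proceed via Frobenius trace maps, Cartier operators, $F$-singularities, or MMP, and at present these tools only give weak positivity in low dimensions, typically $\dim X \le 3$ with $p > 5$, which is exactly the regime of this paper. A secondary obstacle is the absence of resolution of singularities in characteristic $p$ for $\dim X \ge 4$, which blocks several of the intermediate reductions (spreading out Iitaka fibrations, birationally modifying $X$ to a nice model, etc.). A proof of the full $C_{n,m}$ conjecture in positive characteristic would therefore require substantial new advances both in positivity of direct images and in minimal model theory in higher dimensions.
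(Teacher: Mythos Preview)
The statement you were asked to prove is labelled \emph{Conjecture}~1.1 in the paper, and the paper does not prove it. Iitaka's $C_{n,m}$ is an open problem in positive characteristic (and in characteristic zero for general $n,m$); the paper establishes only the special case recorded as Theorem~1.2, namely $C_{3,1}$ under the additional hypotheses that $K_X+\Delta$ is $f$-big and $\mathrm{char}\,k>5$. So there is no ``paper's own proof'' to compare your attempt against.

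Your proposal is not a proof either, and to your credit you say so explicitly in the final paragraph. What you have written is a reasonable summary of the Viehweg reduction strategy together with an honest inventory of the positive-characteristic obstructions (failure of weak positivity for $f_*\omega_{X/Y}^{\otimes N}$ in general, lack of resolution in dimension $\ge 4$, pathologies of Frobenius). That is useful as context, but it is a research outline rather than an argument: Steps~(a) and~(b) both invoke instances of $C_{n',m'}$ inductively, so the scheme is circular unless one has an independent proof of the base cases, and the ``key step'' of weak positivity is precisely the missing ingredient you yourself flag.

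If the intent was to address what the paper actually proves, the relevant target is Theorem~1.2, and the paper's approach is quite different from the Viehweg program you sketch: it runs a relative LMMP in dimension three (available for $p>5$) to produce a model on which $K_{X'}+\Delta'$ is nef and $f'$-semi-ample, extracts from Frobenius pullbacks of $f_*\O_X(m(K_X+\Delta))$ a nef sub-bundle of large rank (Corollary~2.12), and then in the delicate case $g(Y)=1$, $\deg V=0$ uses the Atiyah--Oda classification of bundles on elliptic curves together with adjunction on dlt centres and Tanaka's abundance for log surfaces to force certain line bundles in $\Pic^0(Y)$ to be torsion. None of this appears in your outline.
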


When $\mathrm{char} k = 0$, $X_{\eta}$ is a smooth variety over $k(\eta)$, and this conjecture has been widely studied (\cite{Bir09,CH11,Fuj14,Kaw81,Kaw82,Kaw85,Kol87,Vie77,Vie83}).

In positive characteristics this conjecture will be subtler: $X_{\eta}$ is regular but not necessarily smooth over $k(\eta)$, the geometric generic fiber $X_{\bar{\eta}}$ is not even reduced if $f$ is not separable. In \cite{Zha16} a weaker conjecture is proposed: assuming the geometric generic fiber $X_{\bar{\eta}}$ is integral and has a smooth projective birational model $\tilde{X}_{\bar{\eta}}$, then
$$WC_{n,m}: \kappa(X)\geq \kappa(Y) + \kappa(\tilde{X}_{\bar{\eta}}).$$
Note that $\kappa(\tilde{X}_{\bar{\eta}}) \leq \kappa(X_{\eta})$ by \cite[Corollary 2.5]{CZ15}, so $C_{n,m}$ implies $WC_{n,m}$. If granted smooth resolution of singularities in dimension $n$, to prove weak subadditivity $WC_{n,m}$, by Frobenius base changes and resolution of singularities we can reduce to prove $C_{n,m}$ for a fibration with smooth geometric generic fiber (\cite[proof of Corollary 1.3]{BCZ15}).

Up to now, the following have been proved
\begin{itemize}
\item[(i)]{$WC_{n, n-1}$ and $C_{2,1}$ (\cite{CZ15});}
\item[(ii)]{$WC_{3,1}$ (over $\bar{\mathbb{F}}_p, p >5$ by \cite{BCZ15}, over general $k$ with $\mathrm{char}~k >5$ by \cite{Ej15} and \cite{EZ16});}
\item[(iii)]{$C_{3,1}$ under the situation that $K_{X_{\eta}}$ is big, $g(Y) >1$ and $\mathrm{char}~k >5$ (\cite{Zha16}).}
\end{itemize}
Some general results on $C_{n,m}$ are proved under certain technical assumptions in \cite{Pat14, Pat13} and \cite{Zha16}.

This paper aims to prove the following result, which implies $C_{3,1}$ under the situation that $K_{X_{\eta}}$ is big and $\mathrm{char}~k >5$.
\begin{thm}\label{mth}
Let $X$ be a normal $\Q$-factorial projective 3-fold and $f:X\to Y$ a fibration to a smooth projective curve over an algebraically closed field $k$ of characteristic $p>5$. And let $\Delta$ be an effective $\Q$-divisor on $X$ such that $(X, \Delta)$ is klt. Assume that $K_X + \Delta$ is $f$-big. Then
$$\kappa(X, K_X + \Delta) \geq 2 + \kappa(Y).$$
\end{thm}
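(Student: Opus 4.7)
The plan is to run a relative MMP over $Y$ to put $K_X+\Delta$ into a canonical form, and then exploit positivity of the direct image sheaves $f_*\O_X(m(K_{X/Y}+\Delta))$ to build enough pluricanonical sections on $X$.

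First, since $(X,\Delta)$ is klt $\Q$-factorial with $K_X+\Delta$ being $f$-big and $\mathrm{char}\,k>5$, I run a $(K_X+\Delta)$-MMP over $Y$ using the positive-characteristic MMP for klt $3$-folds (Hacon--Xu, Birkar, Cascini--Tanaka--Xu, Waldron). This terminates because $K_X+\Delta$ is $f$-big; after passing to the relative log canonical model, $K_X+\Delta$ becomes $f$-ample and $\kappa(X,K_X+\Delta)$ is preserved. I henceforth assume $K_X+\Delta$ is $f$-ample.

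Next, for sufficiently divisible $m$, set $\F_m := f_*\O_X(m(K_{X/Y}+\Delta))$. Since $(X_\eta,\Delta_\eta)$ is a klt surface pair with ample log canonical divisor, $\mathrm{rk}\,\F_m \sim \tfrac{v}{2}m^2$, where $v=(K_{X_\eta}+\Delta_\eta)^2>0$. By the positivity results for direct images in characteristic $p>5$ (Patakfalvi, Ejiri, Ejiri--Zhang), $\F_m$ is weakly positive on $Y$, i.e., nef as a vector bundle on the curve. The decisive additional input is a lower bound of the form $\deg \F_m \gtrsim c\,m\cdot \mathrm{rk}\,\F_m$ (slope growing at least linearly in $m$), which is the positive-characteristic analogue of Viehweg's bigness theorem for fibrations whose generic fiber has big log canonical divisor; it should follow from the cited Frobenius-based positivity machinery combined with the $f$-bigness of $K_X+\Delta$ on the relative log canonical model. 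Establishing this slope growth is the technical heart of the argument and the main obstacle.

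Finally, I convert positivity into Kodaira dimension. Using $f_*\O_X(m(K_X+\Delta))=\F_m\otimes\omega_Y^{\otimes m}$, the case $\kappa(Y)=1$ (i.e., $g(Y)\ge 2$) is already in \cite{Zha16} (item (iii) of the list): here $\omega_Y$ is ample, and weak positivity of $\F_m$ combined with Riemann--Roch on $Y$ already yields $h^0(Y,\F_m\otimes\omega_Y^{\otimes m}) \gtrsim m^3$. For $\kappa(Y)\le 0$, however, $\omega_Y$ has non-positive degree and weak positivity of $\F_m$ is insufficient. In those cases I decompose $\F_m$ via Grothendieck's splitting theorem on $\mathbb{P}^1$ or Atiyah's classification on an elliptic curve, and combine this with the slope lower bound of the previous step to extract $h^0(Y, \F_m\otimes\omega_Y^{\otimes m}) \gtrsim m^{2+\kappa(Y)}$, which gives the desired bound on $\kappa(X, K_X+\Delta)$.
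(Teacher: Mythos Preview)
Your proposal has two genuine gaps, and the second is precisely the point of the paper.

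First, the claim that $\F_m=f_*\O_X(m(K_{X/Y}+\Delta))$ is nef is not justified. The positivity results of Patakfalvi and Ejiri require the \emph{geometric} generic fiber $X_{\bar\eta}$ to have strongly $F$-regular (or at least $F$-pure) singularities. Here $(X_\eta,\Delta_\eta)$ is klt over $k(\eta)$, but $k(\eta)$ is not perfect, and after base change to $\overline{k(\eta)}$ the pair can acquire very bad singularities. The paper says this explicitly in the introduction: ``$f_*\omega_{X/Y}^l$ is not necessarily nef.'' What one can actually extract (Corollary~\ref{corp}) is only that some Frobenius pullback $F_Y^{g*}\bigl(f_*\O_X(m(K_X+\Delta))\otimes\omega_Y^{-1}\bigr)$ contains a nef sub-bundle $V$ of rank $\ge cm^2$, not that the whole sheaf is nef.

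Second, and more seriously, the slope bound $\deg\F_m\gtrsim c\,m\cdot\mathrm{rk}\,\F_m$ that you call ``the technical heart'' is not established and in fact cannot be expected. The paper's hardest case is exactly when $Y$ is elliptic and $\deg V=0$: here the nef sub-bundle is a direct sum of degree-zero line bundles $\L_i\in\Pic^0(Y)$ (after a further base change via Proposition~\ref{prop:decomp}), so there is no positive slope to exploit and your Riemann--Roch count gives nothing. The paper handles this case by an entirely different mechanism: one obtains effective divisors $B_i\sim m(K_X+\Delta)-f^*L_i$, passes to a carefully chosen dlt modification $(\hat X,\hat B)$, restricts to suitable boundary components $\hat G_1,\hat G_2\subset\hat B$, and uses adjunction together with Tanaka's abundance for lc surfaces over imperfect fields (Theorem~\ref{abd-surf}) to force two of the $L_i$ to be $\Q$-torsion. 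This geometric argument, together with the reduction Lemma~\ref{cl}, is what replaces the missing slope growth; your outline contains no analogue of it.
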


The proof combines ideas of \cite{EZ16} and \cite{Zha16}.

When the geometric generic fiber $X_{\ol\eta}$ has strongly $F$-regular singularities and $K_X$ is $f$-big, similar result has been proved by Ejiri \cite{Ej15} and
Zhang \cite{Zha16}. Let's review their approaches. In this case, for sufficiently divisible $l$ the sheaf $f_*\omega_{X/Y}^l$ is nef (\cite{Pat14}, \cite{Ej15}). Granted this result, $C_{3,1}$ can be proved easily if either $g(Y) > 1$ or $\deg f_*\omega_{X/Y}^l >0$. To treat the case $\deg K_Y = \deg f_*\omega_{X/Y}^l = 0$, Ejiri \cite{Ej15} uses a deep result of vector bundles on elliptic curves (Proposition \ref{prop:decomp}) and trace maps of relative Frobenius iterations in a very clever way; and Zhang \cite{Zha16} proves that $f$ is birationally isotrivial by use of moduli theory.

In general, though $X_{\ol\eta}$ is integral\footnote{this is because a fibration from a normal variety to a curve is separable by \cite[Lemma 7.2]{Bad01}, so the geometric generic fiber is integral by \cite[Theorem 7.1]{Bad01}}, it may have very bad singularities, so $f_*\omega_{X/Y}^l$ is not necessarily nef. Instead, following observations of  \cite{PSZ13} and \cite{Zha16}, by use of minimal model theory in dimension three,
we can show that, for sufficiently divisible $m$ and $g$, the sheaf $F_Y^{g*}(f_*\O_X((m+1)K_X + m\Delta)\otimes \omega_Y^{-1})$ contains a nef sub-bundle $V$ of rank at least $cm^2$ for some $c>0$ (Corollary \ref{corp}).
By some standard arguments we can show Theorem \ref{mth} under the condition that $\deg K_Y> 0$ or that $\deg V > 0$. To treat the remaining case $\deg K_Y = \deg V = 0$, we will use the approach of \cite[Sec. 4]{EZ16}:  first we construct some effective divisors $B_i \sim m(K_X + \Delta) - L_i$ where $L_i \in \Pic^0(Y)$, then consider the restriction of $B_i$ on the log canonical centers of a carefully chosen dlt pair, finally by use of adjunction formula and abundance of log canonical surfaces due to Tanaka (\cite{Ta15}), we can show that some $L_i$ are torsion, which concludes our theorem easily.


\textbf{Notation and Conventions:} In this paper, we fix an algebraically closed field $k$ of characteristic $p>0$.
A {\it $k$-scheme} is a separated scheme of finite type over $k$. A {\it variety}
means an integral $k$-scheme, and a {\it curve} (resp. {\it surface, $n$-fold}) means
a variety of dimension one (resp. two, $n$).

For the notions of minimal model theory such as lc, klt pairs, flip and divisorial
contractions and so on, please refer to \cite[Sec. 2.5]{Bir16}. By \cite{CP08, CP09, Cut04}, we can always take a log smooth resolution of a pair $(X, \Delta)$ in dimension three.

For a variety $X$, we use $F_X^e: X^{e} \to X$ for the $e$-th absolute Frobenius iteration.

Let $\varphi:X \to T$ be a morphism of schemes and let $T'$ be a $T$-scheme.
Then we denote by $X_{T'}$ the fiber product
$X\times_{T}T'$. For a Cartier
or $\Q$-Cartier divisor $D$ on $X$ (resp. an $\O_X$-module $\G$), the pullback of $D$ (resp. $\G$) to $X_{T'}$
is denoted by $D_{T'}$ or $D|_{X_{T'}}$ (resp. $\G_{T'}$ or $\G|_{X_{T'}}$) if it is well-defined.

\begin{small}
\begin{acknowledgement}
The author thanks BICMR and KIAS, where part of this work was done.
He would like to thank Prof. Chenyang Xu and Dr. Sho Ejiri for many useful discussions. He also thanks the anonymous referee for pointing out some inaccuracies and useful suggestions to improve this note.
The author is supported by grant
NSFC (No. 11401358 and No. 11531009).
\end{acknowledgement}
\end{small}

\section{Preliminaries}\label{section:mmp}
In this section, we recall some technical results which will be used in the sequel.
\subsection{Minimal model of $3$-fold}

\begin{thm}\label{thm:mmp}\samepage
Assume that the base field $k$ has characteristic $p>5$. Let $X$ be a normal $\Q$-factorial quasi-projective 3-fold, $f:X\to Z$ a fibration and $\Delta$ an effective $\Q$-divisor on $X$.

(1) If either $(X,\Delta)$ is klt and $K_X+\Delta$ is pseudo-effective over $Z$, or $(X,\Delta)$ is lc and $K_X+\Delta$ has a weak Zariski decomposition\footnote{i.e., there exists a birational projective morphism $\mu: W \to X$ such that $\nu^*(K_X + \Delta) = P + M$ where $P$ is nef over $Z$ and $M$ is effective},
then $(X,\Delta)$ has a log minimal model over $Z$.

(2) If $(X, \Delta)$ is a dlt pair and $Z$ is a smooth projective curve with $g(Z) \geq 1$, then every step of LMMP \emph{(}\cite[Sec. 3.5-3.7]{Bir16}\emph{)} starting from $(X, \Delta)$ is over $Z$.

(3) If $(X, \Delta)$ is klt and $K_X + \Delta$ is nef and big over $Z$, then $K_X + \Delta$ is semi-ample over $Z$.
\end{thm}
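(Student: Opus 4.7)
The plan is to deduce all three assertions from the now-established three-dimensional minimal model program in characteristic $p > 5$, plus a short geometric observation for part (2).

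For part (1), the klt case with $K_X + \Delta$ pseudo-effective over $Z$ is exactly the main existence theorem for log minimal models of $\Q$-factorial klt $3$-folds over a base in characteristic $p > 5$, as established by Birkar (see \cite{Bir16}) and its precursors. For the lc case with a weak Zariski decomposition $\mu^*(K_X + \Delta) = P + M$ with $P$ nef over $Z$ and $M$ effective, I would first take a $\Q$-factorial dlt modification to reduce to the dlt setting, and then approximate $\Delta$ by a klt boundary $\Delta_{\varepsilon} = (1-\varepsilon)\Delta + \varepsilon A$ with $A$ a small ample $\Q$-divisor. Running the $(K_X + \Delta)$-MMP with scaling and using $P$ to keep the nef threshold bounded away from zero should yield termination after finitely many flips and divisorial contractions, producing a log minimal model in the limit.

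For part (2), I would invoke the cone theorem for dlt $3$-folds in char $p > 5$: every $(K_X + \Delta)$-negative extremal ray $R$ is generated by a rational curve $C \subset X$. Since $g(Z) \ge 1$, every morphism $\mathbb{P}^1 \to Z$ is constant, so the image $f(C)$ is a single point; in particular the associated divisorial contraction or flipping contraction factors through $Z$, and the flip of a small contraction over $Z$ again lives over $Z$. Induction on the steps of the MMP then gives the claim.

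For part (3), this is the relative base-point free theorem in dimension three in char $p > 5$: for a klt pair $(X, \Delta)$ with $K_X + \Delta$ nef and big over $Z$, semi-ampleness over $Z$ follows from the base-point free arguments developed in the course of establishing the three-dimensional MMP, applied relatively over the curve $Z$. The main obstacle is the lc portion of (1): existence of log minimal models for lc pairs in positive characteristic is substantially more delicate than the klt case, and the weak Zariski decomposition hypothesis is essential to ensure termination; the remaining assertions are then straightforward applications of standard results in the three-dimensional MMP over $p > 5$.
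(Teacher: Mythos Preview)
Your proposal is correct and matches the paper's approach: the paper proves this theorem entirely by citation, referring to \cite[Theorem 1.2 and Proposition 8.3]{Bir16} for (1), the cone theorem \cite[Theorem 1.1]{BW14} for (2), and \cite{Bir16} or \cite{Xu15} for (3). Your argument for (2)---that extremal rays are spanned by rational curves, which must be contracted by any map to a curve of genus $\geq 1$---is exactly the content behind that citation; your sketch for the lc portion of (1) goes beyond what the paper supplies (a bare reference to Birkar's Proposition 8.3), and while the precise mechanism in \cite{Bir16} differs somewhat from the $\varepsilon$-approximation you outline, you correctly flag this as the only nontrivial input.
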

\begin{proof}
For (1) please refer to \cite[Theorem 1.2 and Proposition 8.3]{Bir16}.
(2) follows from cone theorem \cite[Theorem 1.1]{BW14}.
For (3), please refer to \cite{Bir16} or \cite{Xu15}.
\end{proof}

\begin{thm}\label{abd-surf}
Let $X$ be a normal projective surface or a curve over a field $K$ with $\mathrm{char}~K = p >0$, and let $\Delta$ be an effective $\Q$-divisor on $X$. Assume that $(X, \Delta)$ is log canonical and $K_X + \Delta$ is nef. Then $K_X + \Delta$ is semi-ample.
\end{thm}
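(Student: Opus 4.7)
The proof divides into the curve and surface cases.

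In the curve case $X$ is a normal projective curve over $K$, and the statement reduces to elementary degree considerations. Nefness of $K_X + \Delta$ means its degree is nonnegative. If $\deg(K_X + \Delta) > 0$, then by Riemann--Roch the sheaf $\O_X(m(K_X + \Delta))$ is globally generated for all sufficiently divisible $m$, so $K_X + \Delta$ is semi-ample. If $\deg(K_X + \Delta) = 0$, then $\deg \Delta = -\deg K_X \leq 0$ forces the arithmetic genus of $X$ to be $0$ or $1$; one checks in each subcase (using in the genus-one case that $\Delta$ must vanish) that $K_X + \Delta \sim_{\Q} 0$, hence is trivially semi-ample.

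In the surface case this is the main theorem of Tanaka \cite{Ta15}, and the plan is to reproduce its strategy by stratifying on the numerical dimension $\nu := \nu(K_X + \Delta) \in \{0,1,2\}$. When $\nu = 2$ the divisor is big and nef, and one applies a base-point-free theorem for log canonical surfaces; in positive characteristic this is obtained via Keel's theorem relating semi-ampleness to endowments through finite morphisms (in particular the relative Frobenius), once one checks that the augmented base locus of $K_X + \Delta$ is $0$-dimensional. When $\nu = 0$ one shows $K_X + \Delta$ is numerically trivial and then upgrades this to $\Q$-linear triviality via torsion phenomena in $\Pic^0$ of a normal projective surface. The most delicate case is $\nu = 1$: after a small perturbation $K_X + \Delta + \e H$ to produce a nef and big divisor, the base-point-free theorem yields a contraction $\vp: X \to Z$ onto a curve, and one then combines adjunction on the general fiber with the curve case applied to a suitable discriminant divisor on $Z$ to show $K_X + \Delta \sim_{\Q} \vp^* D$ for some semi-ample $\Q$-divisor $D$ on $Z$.

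The principal obstacle, relative to the characteristic-zero argument, is the failure of Kawamata--Viehweg vanishing for log canonical pairs in positive characteristic, which rules out the classical Kawamata--Shokurov proof. Tanaka bypasses this by substituting Keel's Frobenius-theoretic criterion for base-point-freeness, and by handling the non-klt locus through adjunction together with carefully chosen perturbations that reduce each case either to the klt situation (where Fujino--Tanaka abundance applies) or to the one-dimensional situation treated above. For the application in the present paper we invoke \cite{Ta15} as a black box.
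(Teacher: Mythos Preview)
Your proposal is correct, and for the surface case it agrees with the paper: both simply cite Tanaka \cite{Ta15}. Your added sketch of Tanaka's strategy is accurate in broad strokes but is not needed for the argument, since you ultimately invoke the result as a black box anyway.

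Where you diverge from the paper is in the curve case. The paper does not argue directly on the curve at all; instead it replaces the pair $(X,\Delta)$ by $(X\times C,\Delta\times C)$ for an elliptic curve $C$ over $K$, observes that $K_{X\times C}+\Delta\times C=\pi_1^*(K_X+\Delta)$ because $K_C\sim 0$, and thereby reduces immediately to the surface case already handled by Tanaka. Your route is the opposite: you settle the curve case by elementary degree and genus considerations (positive degree $\Rightarrow$ ample; degree zero forces $g\le 1$ and then $K_X+\Delta\sim_{\Q}0$). Both work. The paper's trick is slicker and avoids any case analysis, at the cost of invoking the much deeper surface theorem to prove a one-dimensional fact. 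Your argument is self-contained in dimension one and would survive even without Tanaka's theorem, though you should be slightly careful that $X$ is a regular curve over a possibly imperfect field $K$; the Riemann--Roch computations you need (e.g.\ $h^0(K_X)=1$ when $g=1$, and $\Pic^0=0$ when $g=0$) remain valid in that generality.
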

\begin{proof}
If $X$ is a curve, by considering the pair $(X\times C, \Delta\times C)$ instead, where $C$ is an elliptic curve, we may assume that $X$ is a surface. And for a surface, the assertion is \cite[Theorem 1.1]{Ta15}.
\end{proof}

\subsection{Numerical dimension}
\begin{defn}
Let $D$ be a $\R$-Cartier $\R$-divisor on a smooth projective variety $X$ of dimension $n$.
The numerical dimension $\nu(D)$ is defined as the the biggest natural number $k$ such that
$$\liminf_{m \to \infty}\frac{h^0(\llcorner mD \lrcorner + A)}{m^k} >0 \mathrm{~for~some~ample~divisor}~A~\mathrm{on}~X,$$
if such a $k$ does not exist then define $\nu(D) = -\infty$.
\end{defn}
\begin{rem}
Our notation is slightly different from that of \cite[V, Sec. 2]{Nak04}, where $\kappa_{\sigma}(D)$ is used for the above invariant. Usually, the notation $\nu(D)$ is defined for nef $\R$-Cartier $\R$-divisors as
$$\nu(D) = \max \{k \in \N|D^k \cdot A^{n-k} >0\mathrm{~for~an~ample~divisor}~A~\mathrm{on}~X \}.$$
Over the field of complex numbers $\C$, it is well known that for a nef divisor $D$, $\kappa_{\sigma}(D) = \nu(D)$ (\cite[V, 2.7 (6)]{Nak04}).

In arbitrary characteristics, since smooth alteration exists due to de Jong \cite{J97}, this invariant can be defined for $\R$-Cartier $\R$-divisors on normal varieties by pulling back to a smooth variety, which does not depend on the choices of smooth alterations by \cite[2.5-2.7]{CHMS14}.
\end{rem}

We collect some useful results on numerical dimension in the following proposition.
\begin{prop}\label{num-prop}
Let $X$ be a normal projective variety and $D$ a $\R$-Cartier $\R$-divisor on $X$.

(1) If $D$ is nef, then
$$\nu(D) = \max \{k \in \N|D^k\cdot A^{n-k} >0\mathrm{~for~an~ample~divisor}~A~\mathrm{on}~X \}.$$
If moreover $D$ is effective and $S$ is a normal component of $D$, then
$$\nu(D|_S) \leq \nu(D) -1.$$

(2) For two pseudo-effective $\R$-Cartier $\R$-divisors $D_1, D_2$ on $X$, and two real numbers $a, b >0$, we have
$$\nu(D_1 + D_2) = \nu(aD_1 + bD_2).$$

(3) Let $f: X \to Y$ be a fibration and denote by $F$ the generic fiber of $f$. Then we have easy addition
$$\nu(D) \leq \nu(D|_F) + \dim Y.$$
Moreover if there exists a big divisor $H$ on $Y$ such that $D - f^*H$ is effective and $\nu(D|_F) = \kappa(F, D|_F)$, then
$$\nu(D) =\kappa(X, D) = \nu(D|_F) + \dim Y.$$

(4) Let $\mu: W\to X$ be a generically finite morphism between two normal projective varieties and $E$ an effective $\mu$-exceptional Cartier divisor on $W$. Then
$$\nu(D) = \nu(\mu^*D + E).$$

(5) If $(X, \Delta)$ is a $\Q$-factorial log canonical 3-fold, and $(X', \Delta')$ is a minimal model of $(X, \Delta)$, then
$$\nu(K_X + \Delta) = \nu(K_{X'} + \Delta').$$
\end{prop}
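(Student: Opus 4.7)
The plan is to reduce each assertion to the corresponding statement for a smooth projective variety, where $\nu$ coincides with Nakayama's $\kappa_{\sigma}$ and the machinery of \cite[Ch. V]{Nak04} applies directly; the normal case is then recovered by invoking a smooth alteration (\cite{J97}) together with the comparison results \cite[2.5--2.7]{CHMS14}. Throughout, nefness, pseudo-effectivity, and effectivity are preserved under the pullbacks in question.

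For (1), the first formula is Nakayama's characterization of $\kappa_{\sigma}$ for nef divisors (\cite[V, 2.7 (6)]{Nak04}). For the inequality on a normal component $S$ of the nef effective divisor $D$, write $D = S + D'$ with $D'$ effective, set $k = \nu(D)$, and compute, for any ample $A$ on $X$,
\[
(D|_S)^{k} \cdot (A|_S)^{n-1-k} \;=\; D^{k} \cdot S \cdot A^{n-1-k} \;\leq\; D^{k+1} \cdot A^{n-1-k} \;=\; 0,
\]
the middle inequality because $D^k \cdot D' \cdot A^{n-1-k} \geq 0$ (nef intersected with effective) and the final vanishing by the definition of $k$. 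Since $D|_S$ is nef on $S$ and $A|_S$ is ample, the first part of (1) applied on $S$ then gives $\nu(D|_S) \leq k-1$.

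For (2), the two properties $\nu(cE) = \nu(E)$ for $c>0$ and $\nu(E_1+E_2) \geq \nu(E_1)$ when $E_2$ is pseudo-effective are immediate from the definition. Setting $c = \min(a,b)$, both $aD_1+bD_2 - c(D_1+D_2)$ and $c^{-1}(aD_1+bD_2) - (D_1+D_2)$ are pseudo-effective, so these two properties squeeze $\nu(aD_1+bD_2) = \nu(D_1+D_2)$. For (3), the inequality is the standard easy-addition bound
\[
h^0(X, \llcorner mD \lrcorner) \;\leq\; h^0(F, \llcorner mD|_F \lrcorner) \cdot h^0(Y, mH_Y)
\]
for $H_Y$ a sufficiently positive ample divisor on $Y$, obtained from the Leray filtration. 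For the equality, the existence of a big $H$ with $D - f^*H$ effective produces $m^{\dim Y}$-order growth of sections in the base direction by pulling back sections of $mH$, while $\nu(D|_F) = \kappa(F, D|_F)$ supplies $m^{\nu(D|_F)}$-order growth in the fiber direction; tensoring yields $\kappa(X,D) \geq \nu(D|_F) + \dim Y$, and combining with $\kappa(X,D) \leq \nu(D)$ and the easy addition forces equality throughout.

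Parts (4) and (5) are essentially formal. (4) is the invariance of $\kappa_{\sigma}$ under exceptional modifications, proved in the smooth case in \cite[V, 2.7 (1)]{Nak04}, since an effective $\mu$-exceptional divisor does not contribute new global sections above $\mu^*D$. Part (5) follows by tracking $\nu$ through the MMP on $(X,\Delta)$: each flip or divisorial contraction can be factored through a common log resolution on which (4) applies to show that $\nu(K_X+\Delta)$ is preserved. The only technical point throughout is that Nakayama's arguments in \cite{Nak04}, written in characteristic zero, remain valid in characteristic $p$ after passing to a smooth alteration; this is precisely the content of \cite[2.5--2.7]{CHMS14}, so no substantial new obstacle appears, and the hardest bookkeeping is in the equality half of (3).
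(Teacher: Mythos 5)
Your proposal is correct and follows essentially the same route as the paper: each part is reduced to Nakayama's $\kappa_{\sigma}$-machinery (valid in characteristic $p$ via \cite{CHMS14} and smooth alterations), with the same intersection-theoretic computation $D^{k}\cdot S\cdot A^{n-1-k}\leq D^{k+1}\cdot A^{n-1-k}$ for the restriction statement in (1), the same squeeze for (2), easy addition plus the section-growth argument of \cite[Lem. 2.22]{BCZ15} for (3), and (4) feeding into the standard common-resolution/negativity argument for (5). The only cosmetic imprecision is that your displayed easy-addition bound in (3) is stated for $\kappa$ rather than for the twisted spaces $h^0(\llcorner mD\lrcorner+A)$ entering the definition of $\nu$, but the needed modification is routine and is exactly what the cited \cite[V, 2.7 (9)]{Nak04} provides.
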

\begin{proof}
Note that in positive characteristics, analogous results to \cite[V, 1.12 and 1.14]{Nak04} have been proved by \cite{CHMS14}. So the first assertion of
(1) follows from the same argument of \cite[V, 2.7 (6)]{Nak04}; and the second assertion follows from the relation \cite[Sec. 1.2]{Deb01}
$$(D|_S)^{k-1}\cdot (A|_S)^{n-k} = D^{k - 1}\cdot A^{n-k} \cdot S \leq D^k\cdot A^{n-k}.$$

(2) is an easy consequence of \cite[V, 2.7 (1)]{Nak04}.

For (3), the inequality is \cite[V, 2.7 (9)]{Nak04}, and the remaining assertion follows from similar arguments of \cite[V, 2.27 (9)]{Nak04} or \cite[Lem. 2.22]{BCZ15}.

(4) is implied by the proof of \cite[2.7]{CHMS14}.

Finally for (5), taking a common smooth log resolution of $(X, \Delta)$ and $(X', \Delta')$, this assertion follows from (4) and standard arguments of minimal model theory.
\end{proof}

\subsection{Covering Theorem}
The result below is [\cite{Iit82}, Theorem 10.5] when $X$ and $Y$ are both smooth, and the proof there also applies when varieties are normal.
\begin{thm}\textup{(\cite[Theorem 10.5]{Iit82})}\label{ct}
Let $f\colon X \rightarrow Y$ be a proper surjective morphism between complete normal varieties.
If $D$ is a Cartier divisor on $Y$ and $E$ an effective $f$-exceptional divisor on $X$, then
$$\kappa(X, f^*D + E) = \kappa(Y, D).$$
\end{thm}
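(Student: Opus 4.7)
My plan is to handle the two inequalities separately, with the Stein factorization doing most of the work. The easy direction $\kappa(X, f^*D + E) \geq \kappa(Y, D)$ is immediate: for every $m > 0$, pulling back along $f$ and multiplying by the canonical section of $mE$ gives an injection $H^0(Y, \O_Y(mD)) \hookrightarrow H^0(X, \O_X(mf^*D)) \hookrightarrow H^0(X, \O_X(m(f^*D + E)))$, and since $f$ is surjective, the rational map these sections cut out on $X$ has the same image dimension as $\Phi_{|mD|}$ on $Y$.

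For the reverse inequality, I would take the Stein factorization $f: X \xrightarrow{g} Z \xrightarrow{h} Y$, where $g$ is a fibration (with $g_*\O_X = \O_Z$) and $h$ is a finite surjection of normal varieties. Because $h$ is finite it preserves codimensions of images, so the hypothesis that $E$ is $f$-exceptional passes through to $E$ being $g$-exceptional, i.e.\ $g(\text{supp}(E))$ has codimension $\geq 2$ in $Z$. It then suffices to prove the fibration identity $\kappa(X, g^*L + E) = \kappa(Z, L)$ for any Cartier divisor $L$ on $Z$, combined with the finite-cover identity $\kappa(Z, h^*D) = \kappa(Y, D)$; the latter is standard and follows from the fact that $K(Z)/K(Y)$ is a finite field extension.

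The crux is the fibration case, which I would reduce to the claim $g_*\O_X(mE) = \O_Z$ for all $m \geq 0$. Setting $V = Z \setminus g(\text{supp}(E))$, the $g$-exceptional hypothesis makes $V$ an open subset with complement of codimension $\geq 2$, and $g^{-1}(V)$ is disjoint from $\text{supp}(E)$, so $g_*\O_X(mE)|_V = g_*\O_{g^{-1}(V)}|_V = \O_V$ using $g_*\O_X = \O_Z$. Hence the natural inclusion $\O_Z \hookrightarrow g_*\O_X(mE)$ coming from effectivity of $E$ is an isomorphism on $V$. Composing with the map to the double dual yields a morphism $\O_Z \to g_*\O_X(mE)^{\vee\vee}$ of reflexive sheaves on the normal variety $Z$ that is an isomorphism on the codimension-one-complement open $V$, hence an isomorphism globally; torsion-freeness of $g_*\O_X(mE)$ then sandwiches it between $\O_Z$ and $\O_Z$, forcing $g_*\O_X(mE) = \O_Z$. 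The projection formula then gives $g_*\O_X(m(g^*L + E)) = \O_Z(mL)$, hence $H^0(X, m(g^*L + E)) = H^0(Z, mL)$, and the surjectivity of $g$ identifies the associated Iitaka maps.

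The step I expect to require the most care is this reflexive-hull argument upgrading the codimension-one agreement into a global identification, since $g_*\O_X(mE)$ is not a priori reflexive and one has to route through the double dual rather than compare the sheaves directly. Once that is in place, chaining the fibration identity with the finite-cover identity via $f = h \circ g$ closes the proof.
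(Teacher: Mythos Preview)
Your argument is correct. The paper does not actually supply a proof of this theorem; it only remarks that the statement is \cite[Theorem~10.5]{Iit82} in the smooth case and that the same proof goes through for normal varieties. So there is no in-paper argument to compare against.

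Your route via the Stein factorization $f = h\circ g$ is the standard one and each step checks out: $Z$ is normal because $g_*\O_X=\O_Z$ with $X$ normal; finiteness of $h$ transfers $f$-exceptionality of $E$ to $g$-exceptionality; and the key computation $g_*\O_X(mE)=\O_Z$ is handled correctly by your reflexive-hull sandwich (torsion-freeness of $g_*\O_X(mE)$ gives the injection into the double dual, and the map $\O_Z\to g_*\O_X(mE)^{\vee\vee}$ of reflexive sheaves on normal $Z$ is an isomorphism off codimension~$\ge 2$, hence globally). The projection formula and surjectivity of $g$ then finish the fibration case, and the finite-cover identity $\kappa(Z,h^*D)=\kappa(Y,D)$ is classical. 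The step you flagged as delicate is exactly the one that requires care, and your treatment of it is sound.
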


As a corollary we get the following useful result, which also appears in \cite{EZ16}.
\begin{lem}\emph{(}\cite[Lemma 2.3]{EZ16}\emph{)}\label{inj-pic}
Let $g: W \rightarrow Y$ be surjective projective morphism between projective varieties. Assume $Y$ is normal and let $L_1,L_2 \in \Pic^0(Y)$ be two line bundles on $Y$. If $g^*L_1 \sim_{\mathbb{Q}} g^*L_2$ then $L_1 \sim_{\mathbb{Q}} L_2$.
\end{lem}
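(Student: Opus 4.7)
The plan is to show that the pullback homomorphism $g^*:\Pic(Y)\to\Pic(W)$ has torsion kernel; the lemma then follows by setting $L=L_1\otimes L_2^{-1}\in\Pic^0(Y)$ and using that $L\sim_\Q 0$ is equivalent to $L$ being torsion. After replacing $L$ by a suitable positive multiple, we may assume $g^*L\cong\O_W$, and the goal becomes to show $L^{\otimes N}\cong\O_Y$ for some $N>0$.

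First I would take the Stein factorization $g=\pi\circ h$, where $h:W\to Y'$ is projective with $h_*\O_W=\O_{Y'}$ and $\pi:Y'\to Y$ is finite surjective. Applying $h_*$ to the isomorphism $h^*(\pi^*L)\cong\O_W$ and using the projection formula together with $h_*\O_W=\O_{Y'}$, I obtain
\[
\pi^*L\;\cong\;h_*\bigl(h^*\pi^*L\bigr)\;\cong\;h_*\O_W\;\cong\;\O_{Y'},
\]
which reduces the problem to the case in which $g$ itself is finite surjective.

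Next, let $\nu:Y''\to Y'$ be the normalization and set $\sigma=\pi\circ\nu:Y''\to Y$; this is a finite surjective morphism from the normal variety $Y''$ onto $Y$, and $\sigma^*L\cong\O_{Y''}$. Choose a Cartier divisor $D$ on $Y$ representing $L$; then $\sigma^*D$ is principal on $Y''$, say $\sigma^*D=\mathrm{div}(f)$ with $f\in k(Y'')^\times$. Pushing forward cycles gives $\sigma_*\sigma^*D=(\deg\sigma)\cdot D$, while for a finite dominant map between normal varieties one has the norm identity $\sigma_*\mathrm{div}(f)=\mathrm{div}\bigl(N_{k(Y'')/k(Y)}(f)\bigr)$. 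Combining these, $(\deg\sigma)\cdot D$ is principal as a Weil divisor on $Y$; since $Y$ is normal, $\Pic(Y)$ injects into the Weil divisor class group, so $\O_Y\bigl((\deg\sigma)D\bigr)\cong\O_Y$, i.e.\ $L$ is torsion in $\Pic(Y)$.

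The one step that deserves care is the push-forward identity $\sigma_*\mathrm{div}(f)=\mathrm{div}(N(f))$ for a finite dominant map between normal varieties, a standard norm calculation that is the reason the normalization step is needed. Everything else is purely formal and works uniformly in arbitrary characteristic.
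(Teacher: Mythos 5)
Your argument is correct, and it takes a genuinely different route from the paper. The paper's proof is a one-liner built on the covering theorem (Theorem \ref{ct}): normalize $W$, apply $\kappa(W',g'^*L)=\kappa(Y,L)$ to $L=L_1\otimes L_2^{-1}$ to get $\kappa(Y,L)=0$, and then use $L\in\Pic^0(Y)$ to pass from ``some multiple of $L$ is effective'' to ``$L$ is torsion.'' You instead prove directly that $\Ker\bigl(g^*:\Pic(Y)\to\Pic(W)\bigr)$ is torsion via Stein factorization, the projection formula, and the norm identity $\sigma_*\mathrm{div}(f)=\mathrm{div}(N(f))$ together with $\sigma_*\sigma^*D=(\deg\sigma)D$ for a finite surjective map onto the normal variety $Y$. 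All the ingredients you invoke are valid here: $h_*\O_W=\O_{Y'}$ makes the projection formula step work, normality of $Y$ gives both the degree formula $\sum e_if_i=\deg\sigma$ at codimension-one points and the injectivity of Cartier divisors into Weil divisors needed at the end, and finiteness of integral closure is automatic for varieties. What your approach buys is that it is self-contained (no appeal to Iitaka's covering theorem) and slightly stronger: you never use the hypothesis $L_1,L_2\in\Pic^0(Y)$, whereas the paper's proof needs it in the final (implicit) step that an effective divisor algebraically equivalent to zero is zero. What the paper's approach buys is brevity, since Theorem \ref{ct} is already set up for use elsewhere in the argument.
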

\begin{proof}
Let $L = L_1 \otimes L_2^{-1}$. Denote by $\sigma: W' \to W$ the normalization and  $g'=g\circ \sigma: W' \to Y$. Then $g'^*L \sim_{\Q} 0$. Applying Theorem \ref{ct} to $g': W' \to Y$ gives that $L \sim_{\Q} 0$, which is equivalent to that $L_1 \sim_{\mathbb{Q}} L_2$.
\end{proof}

\subsection{Vector bundles on elliptic curves}
In this subsection, we recall some facts about vector bundles on elliptic curves, which will be used in the proof of Theorem \ref{mth}.
\begin{thm}\label{thm:facts on vb on ell curve}\samepage
Let $C$ be an elliptic curve, and let $\E_C(r,d)$ be the set of isomorphism classes of indecomposable vector bundles of rank $r$ and of degree $d$.
\begin{itemize}
\item[(1)]\textup{(\cite[Theorem 5]{Ati57})}For each $r>0$, there exists a unique element $\E_{r,0}$ of
$\E_C(r,0)$ with $H^0(C,\E_{r,0})\ne0$. Moreover, for every $\E\in\E_C(r,0)$ there exists an $\L\in\Pic^0(C)$ such that $\E\cong\E_{r,0}\otimes\L$.
\item[(2)]\textup{(\cite[Corollary 2.10]{Oda71})} When the Hasse invariant ${\rm Hasse}(C)$
is nonzero, $F_C^*\E_{r,0}\cong \E_{r,0}$. When ${\rm Hasse}(C)$ is zero,
$F_C^*\E_{r,0}\cong \bigoplus_{1\le i\le\min\{r,p\}}\E_{\lfloor(r-i)/p\rfloor+1,0}$,
where $\lfloor r\rfloor$ denotes the round down of $r$.
\end{itemize}
\end{thm}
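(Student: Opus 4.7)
The plan is to prove both parts by induction on the rank $r$, constructing the indecomposables $\E_{r,0}$ as successive non-split extensions of lower-rank ones by $\O_C$, and then analyzing how the relative Frobenius $F_C$ acts on these extensions.

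For part (1), I would set $\E_{1,0} := \O_C$ and, having produced $\E_{r-1,0}$ with $h^0(C,\E_{r-1,0})=1$, look at extensions of $\E_{r-1,0}$ by $\O_C$. Since $\omega_C\cong\O_C$, Serre duality gives
$$\mathrm{Ext}^1(\E_{r-1,0},\O_C) \cong H^1(C,\E_{r-1,0}^{\vee}) \cong H^0(C,\E_{r-1,0})^{\vee},$$
which by induction is one-dimensional. Hence there is a unique (up to isomorphism) non-split extension $0\to\O_C\to\E_{r,0}\to\E_{r-1,0}\to 0$, and the long exact sequence of cohomology combined with $\chi(\E_{r,0})=0$ shows $h^0(\E_{r,0})=1$ and forces $\E_{r,0}$ to be indecomposable (any direct summand containing the section would split the sequence). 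For the second statement, given an arbitrary $\E\in\E_C(r,0)$, a Riemann-Roch argument on the Picard torsor produces some $\L\in\Pic^0(C)$ with $H^0(C,\E\otimes\L^{-1})\ne 0$; choosing a non-zero section realizes $\E\otimes\L^{-1}$ as a non-split extension of an element of $\E_C(r-1,0)$ by $\O_C$, and induction plus uniqueness identifies it with $\E_{r,0}$.

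For part (2), I would pull back the defining extension through $F_C$. Because $F_C^*\O_C=\O_C$ and (by induction) $F_C^*\E_{r-1,0}\cong\E_{r-1,0}$ in the ordinary case, $F_C^*\E_{r,0}$ sits in an extension of $\E_{r-1,0}$ by $\O_C$; the extension class is the image of the original class under the Frobenius-induced map on $H^1(C,\O_C)$, which by definition is the Hasse invariant. When $\mathrm{Hasse}(C)\ne 0$, this map is an isomorphism, so the pulled-back extension is again non-split, and the uniqueness from part (1) yields $F_C^*\E_{r,0}\cong\E_{r,0}$.

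When $\mathrm{Hasse}(C)=0$, the Frobenius action on $H^1(C,\O_C)$ is zero, so every such extension splits after $F_C^*$, producing an $\O_C$-summand. The main obstacle, as I see it, is to bookkeep iterated pullbacks and recombine them into the precise direct sum $\bigoplus_{1\le i\le\min\{r,p\}}\E_{\lfloor(r-i)/p\rfloor+1,0}$: one must track how the split $\O_C$-factors accumulate against the still-indecomposable quotient at each step of the induction, and verify compatibility with the numerical identity $r=\sum_{i=1}^{\min\{r,p\}}(\lfloor(r-i)/p\rfloor+1)$. I would organize this by proving the decomposition modulo an $\O_C^{\oplus s}$-summand at each stage and then dualizing/reassembling via Serre duality, but I expect to lean on the cited result of \cite{Oda71} for the clean combinatorial form rather than rederiving it from scratch.
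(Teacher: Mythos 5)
The paper offers no proof of this statement at all: both parts are quoted directly from the literature (Atiyah's Theorem 5 and Oda's Corollary 2.10), so there is no internal argument to compare yours against, and your proposal amounts to a partial expansion of a black-box citation. Your outline does follow the standard proofs. For part (1), the inductive construction of $\E_{r,0}$ as the unique non-split extension of $\E_{r-1,0}$ by $\O_C$, using $\mathrm{Ext}^1(\E_{r-1,0},\O_C)\cong H^0(C,\E_{r-1,0})^{\vee}$ (one-dimensional by induction and Serre duality with $\omega_C\cong\O_C$), is exactly Atiyah's; the one step you gloss over is the existence, for an arbitrary indecomposable $\E$ of degree $0$, of some $\L\in\Pic^0(C)$ with $H^0(C,\E\otimes\L^{-1})\ne 0$. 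Riemann--Roch alone only gives $h^0=h^1$, both possibly zero, and this existence is a genuine lemma of Atiyah (his Lemma 15), not a formal consequence. For part (2), the ordinary case is right in spirit, but note that for $r\ge 3$ the extension class lives in the one-dimensional space $\mathrm{Ext}^1(\E_{r-1,0},\O_C)$ rather than in $H^1(C,\O_C)$; identifying the Frobenius pullback on that space with the Hasse invariant requires checking that the map $H^1(C,\O_C)\to H^1(C,\E_{r-1,0})$ induced by the canonical filtration is an isomorphism compatible with Frobenius (this is where Oda's computation actually lives). In the supersingular case you explicitly defer the combinatorial decomposition to Oda, which is precisely what the paper itself does. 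So there is no error of approach, but your sketch is not self-contained at exactly the two points flagged above; since the paper uses the statement only as input to Proposition \ref{prop:decomp}, citing Atiyah and Oda as the paper does is the appropriate resolution.
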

\begin{thm}[\textup{\cite[1.4. Satz]{LS77}}]\label{thm:fact on vb on sm curve}\samepage
Let $\E$ be a vector bundle on a smooth projective curve $C$.
If ${F_C^e}^*\E\cong\E$ for some $e>0$, then there exists an \'etale morphism $\pi:C'\to C$ from a smooth
projective curve $C'$ such that $\pi^*\E\cong\bigoplus\O_{C'}$.
\end{thm}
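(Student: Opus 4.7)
The plan is to construct $C'$ by extracting the Frobenius-fixed sections of $\E$ as an étale local system of $\mathbb{F}_{p^e}$-vector spaces, and then trivialize the local system along the cover it determines.

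Fix an isomorphism $\phi: F_C^{e*}\E \xrightarrow{\sim} \E$. For an étale morphism $U \to C$, the composition $s \mapsto \phi(F^{e*}s)$ defines a $p^e$-linear self-map of $\Gamma(U, \E|_U)$, and its fixed set
$$\E^{\phi}(U) = \{s \in \Gamma(U, \E|_U) \mid s = \phi(F^{e*}s)\}$$
is a finitely generated $\mathbb{F}_{p^e}$-module. The key step is to prove that $\E^{\phi}$ is a locally constant étale sheaf of rank $r$. Working locally where $\E \cong \O_C^r$, the isomorphism $\phi$ is an invertible matrix $A \in GL_r(\O_C)$ and the fixed-point condition becomes the system of Artin–Schreier-type equations $s_i = \sum_j A_{ij} s_j^{p^e}$ for $i = 1,\dots,r$. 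The scheme $T \to C$ cut out by these equations is finite and étale of degree $p^{er}$, by a Jacobian computation: the Jacobian of the defining equations with respect to the $s_j$ is the identity matrix (since the $s_j^{p^e}$ terms contribute zero to the derivative in characteristic $p$), hence invertible. This is essentially Lang's theorem applied to the algebraic group $GL_r$, ensuring that the Frobenius-twist map is surjective with étale fibers.

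Because $\E^\phi$ is a locally constant étale sheaf of $\mathbb{F}_{p^e}$-vector spaces of rank $r$, it corresponds to a continuous representation $\rho: \pi_1^{\text{ét}}(C) \to GL_r(\mathbb{F}_{p^e})$, whose image is automatically finite. Let $\pi: C' \to C$ be any connected finite étale cover on which $\rho$ becomes trivial (e.g.\ the Galois cover corresponding to $\ker \rho$); then $\pi^{-1}\E^\phi$ is the constant sheaf $\mathbb{F}_{p^e}^{\oplus r}$ on $C'_{\text{ét}}$. The final step is to identify $\pi^*\E$ with the locally free $\O_{C'}$-module generated by the global sections coming from $\E^\phi$. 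More precisely, the natural evaluation map
$$\E^\phi \otimes_{\mathbb{F}_{p^e}} \O_{C} \longrightarrow \E$$
of étale sheaves is an isomorphism: it suffices to check this at strict Henselizations, where it reduces to the statement that the Frobenius-semilinear $r$-dimensional module admits a full basis of fixed vectors, which is exactly what Lang's theorem supplies. Pulling back along $\pi$ then gives $\pi^*\E \cong \O_{C'}^{\oplus r}$.

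The main obstacle is the assertion that $\E^\phi$ has maximal rank $r$ at \emph{every} point of $C$, i.e.\ that the étale-local Artin–Schreier scheme $T$ is étale of the expected degree globally; this rigidity relies crucially on the invertibility of $\phi$ everywhere and on the characteristic-$p$ simplification of the Jacobian. Once that is established, the Tannakian/étale local systems dictionary furnishes $\pi$ automatically, and one should keep track to make sure $C'$ can be taken smooth projective (which follows since $C$ is smooth projective and $\pi$ is finite étale).
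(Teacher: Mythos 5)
The paper offers no proof of this statement---it is quoted directly from Lange--Stuhler \cite{LS77}---so there is nothing internal to compare against; your argument is, in substance, the standard proof of that theorem (the fixed points of $s\mapsto\phi(F^{e*}s)$ form a rank-$r$ local system of $\mathbb F_{p^e}$-vector spaces because the Artin--Schreier-type equations $s_i=\sum_j A_{ij}s_j^{p^e}$ cut out a finite \'etale scheme of degree $p^{er}$, and the evaluation map $\E^{\phi}\otimes_{\mathbb F_{p^e}}\O_C\to\E$ is an isomorphism on strictly Henselian stalks), and it is correct. The only points worth making explicit in a full write-up are that the fixed-point set is genuinely an $\mathbb F_{p^e}$-subspace (additivity of $x\mapsto x^{p^e}$ plus $\lambda^{p^e}=\lambda$), and that the stalkwise surjectivity of evaluation reduces to the classical fact that a bijective $p^e$-linear endomorphism of a finite-dimensional vector space over a separably closed field admits a basis of fixed vectors, lifted by Hensel's lemma.
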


The following result has appeared in \cite{EZ16}, for readers' convenience the proof is included.
\begin{prop}[\textup{\cite[Proposition 2.7]{EZ16}}]\label{prop:decomp}\samepage
Let $\E$ be a vector bundle on an elliptic curve $C$. Then there exists a finite morphism $\pi:C'\to C$ from an elliptic curve $C'$ such that $\pi^*\E$ is a direct sum of line bundles.
\end{prop}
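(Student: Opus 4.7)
The plan is to reduce to the Atiyah--Oda (Theorem~\ref{thm:facts on vb on ell curve}) and Lange--Stuhler (Theorem~\ref{thm:fact on vb on sm curve}) inputs already quoted. First decompose $\E=\bigoplus_j\E_j$ into indecomposable summands. If, for each $j$, a finite cover $\pi_j:C_j\to C$ by an elliptic curve splits $\pi_j^*\E_j$ into a sum of line bundles, then a normalized irreducible component of the fibre product $C_1\times_C\cdots\times_C C_N$ is again an elliptic curve (isogenies and absolute Frobenius iterations of elliptic curves are closed under fibre product + normalization up to extracting a component, by Riemann--Hurwitz applied to the separable part), and it works for all summands simultaneously. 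So I may assume $\E$ is indecomposable of rank $r$ and degree $d$; by Atiyah such a bundle is automatically semistable on $C$.

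The first substantive step is to reduce to $d=0$. Choose $n$ coprime to $p$ and divisible by $r/\gcd(r,d)$, and let $\pi_1:C_1\to C$ be the quotient by a cyclic subgroup of $C$ of order $n$; this is an \'etale isogeny of elliptic curves of degree $n$. Since pullback by an \'etale map of smooth curves preserves semistability, $\pi_1^*\E$ is semistable of integer slope $nd/r$. Fixing any line bundle $\M$ on $C_1$ of degree $-nd/r$, every indecomposable summand of $\pi_1^*\E\otimes\M$ has degree $0$. As the property ``direct sum of line bundles'' is preserved under tensoring by a line bundle, it now suffices to split each such degree-$0$ indecomposable summand on $C_1$ after a further finite cover by an elliptic curve.

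By Theorem~\ref{thm:facts on vb on ell curve}(1), each such summand has the form $\E_{r',0}\otimes\L$ with $\L\in\Pic^0(C_1)$, and tensoring by $\L$ is again harmless, so the remaining task is to decompose $\E_{r',0}$ itself. If $\mathrm{Hasse}(C_1)\ne0$, Theorem~\ref{thm:facts on vb on ell curve}(2) gives $F_{C_1}^*\E_{r',0}\cong\E_{r',0}$, and Theorem~\ref{thm:fact on vb on sm curve} then supplies an \'etale cover $\pi_2:C'\to C_1$ on which the pullback is trivial. If $\mathrm{Hasse}(C_1)=0$, the decomposition in Theorem~\ref{thm:facts on vb on ell curve}(2) strictly shrinks the maximal rank of an indecomposable summand under each Frobenius pullback as long as that rank is $>1$ (since $\lfloor(s-i)/p\rfloor+1<s$ for $s>1$), so after $e=\lceil\log_p r'\rceil$ absolute Frobenius iterations the pullback is a direct sum of copies of $\E_{1,0}=\O_{C_1}$; take $\pi_2=F_{C_1}^e$. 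Composing $\pi_1$ with $\pi_2$ (and using a fibre product to handle all summands $\E_{r',0}$ at once) produces the desired $\pi:C'\to C$.

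The step I expect to require the most care is the reduction to $d=0$: one must verify that $\pi_1$ can be chosen \'etale (so that semistability of $\E$ transfers to $\pi_1^*\E$), that $C_1$ remains elliptic, and that the twist by $\M$ genuinely turns integer-slope semistability of $\pi_1^*\E$ into degree-$0$ indecomposable summands. Once the problem is isolated to a single $\E_{r',0}$, the Hasse-invariant dichotomy is essentially bookkeeping given the quoted Atiyah--Oda and Lange--Stuhler results.
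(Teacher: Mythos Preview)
Your overall strategy matches the paper's, but the reduction to $d=0$ has a genuine gap. You require an integer $n$ that is both coprime to $p$ and divisible by $r/\gcd(r,d)$; when $p\mid r/\gcd(r,d)$ (for instance $r=p$, $d=1$) no such $n$ exists, so your \'etale isogeny $\pi_1$ simply fails to exist and the argument stalls before you ever reach the Atiyah--Oda/Lange--Stuhler dichotomy. (Dropping ``coprime to $p$'' does not save the step as written either: on a supersingular $C$ there is no cyclic subgroup scheme of order divisible by $p$ whose quotient is \'etale, and you are relying on \'etaleness to transport semistability.) This is exactly the step you flagged as delicate, and it is where the proposal breaks.

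The paper sidesteps both this problem and your fibre-product bookkeeping by arguing contrapositively: assume that $\varphi^*\E$ stays indecomposable for \emph{every} finite $\varphi:B\to C$ from an elliptic curve, and show $r=1$. For the degree reduction it uses the multiplication-by-$r$ map $r_C:C\to C$, which has degree $r^2$; pulling back makes the slope an integer regardless of whether $p\mid r$, and a twist by a suitable $\O(-eQ)$ brings the degree to $0$. The standing indecomposability hypothesis means there are no summands to recombine, so no fibre product is needed. Your argument is easily repaired by replacing your \'etale $\pi_1$ with $r_C$ (and then checking, via the Oda description in Theorem~\ref{thm:facts on vb on ell curve}(2), that every indecomposable summand of the pullback still has the same slope), but as stated the proposal is incomplete.
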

\begin{proof}
We may assume that for every finite morphism $\varphi:B\to C$ from an elliptic curve $B$, $\varphi^*\E$ is indecomposable. Set $d:=\deg\E$ and $r:=\mathrm{rank}\E$. We show that $r=1$. Let $Q\in C$ be a closed point. Replacing $\E$ by $((r_C)^*\E)(-dQ)$, we may assume that $d=0$. Here $r_C:C\to C$ is the morphism given by multiplication by $r$. Hence Theorems \ref{thm:facts on vb on ell curve} and \ref{thm:fact on vb on sm curve} imply that when the Hasse invariant of $C$ is nonzero (resp. zero), there exists an \'etale morphism $\pi:C'\to C$ (resp. an $e>0$) such that $\pi^*\E$ (resp. ${F_C^e}^*\E$) is a direct sum of line bundles. This implies that  $r=1$.
\end{proof}
\subsection{Weak positivity}
\begin{thm}\label{mthp}
Let $f: X \rightarrow Y$ be a fibration from a smooth projective $n$-fold to a smooth projective curve. Let $D$ be a nef, $f$-big and $f$-semi-ample Cartier divisor on $X$. Then there exists a real number $c>0$ such that, for sufficiently divisible $m$ and $g$, the sheaf $F_Y^{g*}f_*\O_X(mD + K_{X/Y})$ contains a
nef sub-bundle of rank at least $cm^{n-1}$.
\end{thm}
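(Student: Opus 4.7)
My plan is to use all three properties of $D$ simultaneously: $f$-bigness provides the rank estimate on $\mathcal{E}_m := f_*\O_X(mD + K_{X/Y})$, $f$-semi-ampleness ensures that $|mD|$ is relatively free for $m$ divisible, and the global nefness on $X$ produces Frobenius-stable sections on the total space that span a nef sub-bundle of $F_Y^{g*}\mathcal{E}_m$. First, since $D|_{X_\eta}$ is big and semi-ample on the regular projective variety $X_\eta / k(\eta)$ of dimension $n-1$, asymptotic Riemann--Roch immediately gives $\mathrm{rank}(\mathcal{E}_m) = h^0(X_\eta,(mD+K_{X/Y})|_{X_\eta}) \geq c_0 m^{n-1}$ for some $c_0 > 0$ and all $m$ sufficiently divisible.

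The core construction is a Frobenius trace argument. For any Cartier $A$ on the smooth $X$ and any $g \geq 0$, tensoring the Grothendieck trace $F_{X*}^g \omega_X \to \omega_X$ by $\O_X(A)$ and applying the projection formula gives a sheaf map $F_{X*}^g \O_X(K_X + p^g A) \to \O_X(K_X + A)$. Taking $A = mD - f^*K_Y$, pushing forward by $f$, using the commutation $f_* F_{X*}^g = F_{Y*}^g f_*$ arising from $f\circ F_X^g = F_Y^g \circ f$, and finally passing to the left adjoint yields a natural morphism
\[ \phi_g \colon F_Y^{g*}\mathcal{E}_m \longrightarrow \mathcal{E}_{p^g m}. \]
Sections of $\mathcal{E}_{p^g m}$ in the image of $\phi_g$ correspond to Frobenius-stable sections in $H^0(X, p^g m D + K_X)$, and these pull back to span a globally generated subsheaf of $F_Y^{g*}\mathcal{E}_m$.

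The crucial rank estimate is that this globally generated subsheaf has rank at least $cm^{n-1}$. Using the global nefness of $D$ together with $f$-bigness (in particular $D^{n-1}\cdot F_\eta > 0$ for a general fiber class $F_\eta$), asymptotic Riemann--Roch on $X$ gives $h^0(X, p^g m D + K_X) \geq c' m^{n-1} p^{g(n-1)}$ for $m, g$ sufficiently divisible, and a dimension count then produces a globally generated subsheaf of $F_Y^{g*}\mathcal{E}_m$ of rank $\geq c m^{n-1}$. Since on the smooth projective curve $Y$ every globally generated coherent subsheaf of a locally free sheaf is nef (its dual embeds into $\O_Y^N$ and hence admits no negative-slope quotient), this provides the desired nef sub-bundle.

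The main obstacle is verifying that the Frobenius trace is sufficiently large on global sections. Without an $F$-regularity hypothesis on $X_{\bar\eta}$, the trace $\phi_g$ need not be surjective on $H^0$, and one must exploit the \emph{global} nefness of $D$ on $X$ (not merely its $f$-ampleness) to bound from below the dimension of the surviving Frobenius-stable sections. This technical step parallels the approach of \cite{EZ16}, and the delicate bookkeeping lies in tracking how large $g$ must be (relative to $m$) for the rank bound $cm^{n-1}$ to kick in uniformly in $m$.
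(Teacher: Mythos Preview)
Your outline has the right ingredients---Frobenius trace maps and stable sections---but the sub-bundle construction is garbled and there is a genuine gap in the rank estimate. The map $\phi_g \colon F_Y^{g*}\mathcal{E}_m \to \mathcal{E}_{p^g m}$ you build goes the wrong way to produce a sub-bundle of $F_Y^{g*}\mathcal{E}_m$: sections of the target do not ``pull back'' along $\phi_g$, so the phrase ``these pull back to span a globally generated subsheaf of $F_Y^{g*}\mathcal{E}_m$'' has no content. In the paper (via \cite[Theorem~1.11]{Zha16} and \cite[Corollary~6.29]{PSZ13}) the nef sub-bundle $S^g f_*\O_X(mD+K_{X/Y}) \subset F_Y^{g*}\mathcal{E}_m$ is the \emph{image} of a trace map built from the relative Frobenius $F_{X/Y}^g\colon X \to X\times_{Y,F_Y^g}Y$, and the global nefness of $D$ is what makes this image nef---it is not obtained by exhibiting global sections.

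More seriously, your rank argument does not go through. The inequality $h^0(X, p^g m D + K_X) \geq c'(p^g m)^{n-1}$ is not a consequence of asymptotic Riemann--Roch: $D$ is nef but need not be big on $X$, so $D^n$ may vanish and the next coefficient $-\tfrac{1}{2}K_X\cdot D^{n-1}$ has no sign; the positivity $D^{n-1}\cdot F_\eta>0$ is a fiber-wise statement and does not control global $h^0$. Even granting such a bound, no ``dimension count'' converts a large $h^0$ on $X$ into a lower bound on the \emph{rank} of a subsheaf on the curve $Y$---all those sections could sit in a single sub-line-bundle of high degree. The paper avoids this entirely by working fiber-wise: the rank of $S^g f_*\O_X(mD+K_{X/Y})$ is identified with $\dim_{k(\bar\eta)} S^0\bigl(X_{\bar\eta}, (mD+K_{X/Y})|_{X_{\bar\eta}}\bigr)$ by \cite[Proposition~2.5]{Zha16}, and this is bounded below by $cm^{n-1}$ using only that $D|_{X_{\bar\eta}}$ is nef and big on the $(n-1)$-fold $X_{\bar\eta}$ (\cite[Proposition~2.3]{Zha16}). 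The rank estimate is thus purely on the geometric generic fiber; global nefness of $D$ plays no role in it.
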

\begin{proof}
Let $D_m = mD + K_{X/Y}$. Then $D_m - K_{X/Y}=mD$ is nef, $f$-big and $f$-semi-ample. Applying \cite[Theorem 1.11]{Zha16} (or \cite[Corollary 6.29]{PSZ13} if $D$ is $f$-ample), we conclude that for sufficiently divisible $g$, the sheaf $F_Y^{g*}f_*\O_X(mD + K_{X/Y})$ contains a
nef sub-bundle $S^gf_*\O_X(mD + K_{X/Y})$ (see \cite[2.2.2]{Zha16} for the definition), whose rank equals to $\dim_{k(\ol\eta)} S^0(X_{\ol\eta}, (mD + K_{X/Y})|_{X_{\ol\eta}})$ by  \cite[Proposition 2.5]{Zha16}. Finally since $D|_{X_{\ol\eta}}$ is big, the proof of \cite[Proposition 2.3 (2), (3)]{Zha16} shows $\dim_{k(\ol\eta)} S^0(X_{\ol\eta}, (mD + K_{X/Y})|_{X_{\ol\eta}}) \geq cm^{n-1}$ for some $c>0$, which completes the proof.
\end{proof}
\begin{cor}\label{corp}
Let $X$ be a normal $\Q$-factorial projective 3-fold, $f: X \to Y$ a fibration to a smooth projective curve of genus $\geq 1$, and let $\Delta$ be an effective $\Q$-divisor on $X$. Assume that $(X, \Delta)$ is klt and $K_X + \Delta$ is $f$-big. Then there exists a real number $c>0$ such that, for sufficiently divisible $m$ and $g$, the sheaf $F_Y^{g*}(f_*\O_X((m+1)K_X + m\Delta)\otimes \omega_Y^{-1})$ contains a nef sub-bundle of rank at least $cm^2$.
\end{cor}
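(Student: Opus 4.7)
The plan is to reduce to Theorem \ref{mthp} by passing to a relative log minimal model over $Y$. Since $(X,\Delta)$ is klt and $K_X+\Delta$ is $f$-big (hence $f$-pseudo-effective), Theorem \ref{thm:mmp}(1) supplies a log minimal model $(X',\Delta')$ of $(X,\Delta)$ over $Y$, so $K_{X'}+\Delta'$ is nef and big over $Y$, and therefore $f'$-semi-ample by Theorem \ref{thm:mmp}(3), where $f':X'\to Y$ is the induced fibration. Because $g(Y)\ge 1$, Theorem \ref{thm:mmp}(2) applied to the klt pair $(X',\Delta')$ forces every $(K_{X'}+\Delta')$-negative extremal ray to be $f'$-vertical; combined with $f'$-nefness this rules out any such ray, so $K_{X'}+\Delta'$ is in fact globally nef.

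Next, I would take a common log resolution $p:W\to X$, $q:W\to X'$ and pick $N>0$ such that $D':=N(K_{X'}+\Delta')$ is Cartier on $X'$. Setting $\tilde D:=q^*D'$ and $f_W:=f\circ p=f'\circ q$, I observe that $\tilde D$ is a nef, $f_W$-big and $f_W$-semi-ample Cartier divisor on the smooth projective 3-fold $W$, so Theorem \ref{mthp} yields a constant $c_0>0$ such that, for sufficiently divisible $m_0$ and $g$, the sheaf $F_Y^{g*}(f_W)_*\O_W(m_0\tilde D+K_{W/Y})$ contains a nef sub-bundle $V$ of rank $\ge c_0 m_0^2$.

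To descend from $W$ back to $X$, I would invoke Shokurov's negativity to write $p^*(K_X+\Delta)=q^*(K_{X'}+\Delta')+E$ with $E\ge 0$ supported on the divisors contracted by $\phi:=q\circ p^{-1}$, and decompose $K_W-p^*K_X=F^+-F^-$ with effective $p$-exceptional $F^\pm$ sharing no components. Setting $m:=Nm_0$, a direct computation gives
\[
m_0\tilde D+K_{W/Y}=p^*\bigl(m(K_X+\Delta)+K_{X/Y}\bigr)+F^+-F^--mE\le p^*\bigl(m(K_X+\Delta)+K_{X/Y}\bigr)+F^+.
\]
Since $p_*\O_W(F^+)=\O_X$ ($F^+$ is effective and $p$-exceptional), the projection formula provides an injection
\[
(f_W)_*\O_W(m_0\tilde D+K_{W/Y})\hookrightarrow f_*\O_X\bigl(m(K_X+\Delta)+K_{X/Y}\bigr)=f_*\O_X\bigl((m+1)K_X+m\Delta\bigr)\otimes\omega_Y^{-1}.
\]
Applying the exact functor $F_Y^{g*}$, the image of $V$ (saturated inside the ambient bundle if necessary; saturation of a nef subsheaf of a vector bundle on a smooth projective curve remains nef, since any quotient line bundle of the saturation pulls back to a nonzero quotient of $V$ differing by an effective twist) is a nef sub-bundle of $F_Y^{g*}\bigl(f_*\O_X((m+1)K_X+m\Delta)\otimes\omega_Y^{-1}\bigr)$ of rank $\ge c_0 m_0^2=(c_0/N^2)m^2$, yielding the corollary with $c:=c_0/N^2$.

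The principal obstacle is executing the descent cleanly: one must manage Cartier vs $\Q$-Cartier divisibilities (or pass to reflexive sheaves) to justify the projection-formula identity on the nose, confirm the pushforward injection via the effectivity and $p$-exceptionality of $F^+$, and verify that saturation preserves nefness so that a nef sub-bundle (not merely a nef subsheaf) is produced in the target.
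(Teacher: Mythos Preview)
Your proposal is correct and follows essentially the same strategy as the paper: pass to a log minimal model $(X',\Delta')$ over $Y$ (globally nef since $g(Y)\ge 1$), apply Theorem~\ref{mthp} on a common log resolution $W$, then descend to $X$. The only difference is that the paper sidesteps your injection-plus-saturation step by establishing an outright isomorphism $g_*\O_W(mq^*(K_{X'}+\Delta')+K_W)\cong f_*\O_X((m+1)K_X+m\Delta)$: one adds the effective $q$-exceptional divisor $E=mp^*(K_X+\Delta)-mq^*(K_{X'}+\Delta')$ and uses $q_*\O_W(K_W+E)=\O_{X'}(K_{X'})$ together with $p_*\O_W(K_W)=\O_X(K_X)$ (both consequences of klt singularities) and the projection formula.
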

\begin{proof}
Running an LMMP for $K_X + \Delta$ over $Y$, since $g(Y) \geq 1$, by Theorem \ref{thm:mmp} we get a minimal model $(X', \Delta')$ and a fibration $f': X'\to Y$ such that, $K_{X'} + \Delta'$ is nef, $f'$-big and $f'$-semi-ample. Take a common log smooth resolution $W$ of $(X', \Delta')$ and $(X, \Delta)$, and fit them into the following commutative diagram
$$\centerline{\xymatrix{& &W\ar[ld]_{\mu}\ar[rd]^{\nu}\ar[dd]^g &\\
&X'\ar[rd]_{f'} & &X\ar[ld]^f\\
& &Y &
}}$$
where $\mu, \nu$ denote the natural birational morphisms and $g = f'\circ \mu$.
Applying Theorem \ref{mthp}, we show that there exists some $c>0$ such that, for sufficiently divisible $m$ and $g$, the sheaf
$$F_Y^{g*}(g_*\O_W(m\mu^*(K_{X'} + \Delta')+ K_{W/Y})) \cong F_Y^{g*}(g_*\O_W(m\mu^*(K_{X'} + \Delta')+ K_{W}) \otimes \omega_Y^{-1})$$
contains a nef sub-bundle of rank at least $cm^2$.

Since $(X', \Delta')$ is a minimal model of $(X,\Delta)$ (\cite[2.7]{Bir16}), there exists an effective $\mu$-exceptional divisor $E$ on $W$ such that
$$E \sim m\nu^*(K_{X} + \Delta) - m\mu^*(K_{X'} + \Delta').$$
Since $X$ and $X'$ have klt singularities, we have $\nu_*\O_W(K_W) = \O_{X}(K_{X})$, $\mu_*\O_W(K_W) = \O_{X'}(K_{X'})$
and thus $\mu_*\O_W(K_W + E) = \O_{X'}(K_{X'})$.
Applying projection formula, we get
\begin{align*}
&g_*\O_W(m\mu^*(K_{X'} + \Delta')+ K_{W}) \\
&\cong f'_*\mu_*\O_W(m\mu^*(K_{X'} + \Delta')+ K_{W} + E)   \\
&\cong f_*\nu_*\O_W(m\nu^*(K_{X} + \Delta) + K_{W})\\
&\cong f_*\O_{X}(m(K_{X} + \Delta)+ K_{X}) \cong f_*\O_X((m+1)K_X + m\Delta).
\end{align*}
Therefore, the sheaf $F_Y^{g*}(f_*\O_X((m+1)K_X + m\Delta)) \otimes \omega_Y^{-1})$
contains a nef sub-bundle of rank at least $cm^2$.
\end{proof}

\section{Proof of Theorem \ref{mth}}\label{section: proof}
Let the notation be as in Theorem \ref{mth}. Assume $g(Y) \geq 1$. By Theorem \ref{thm:mmp}, $(X, \Delta)$ has a log minimal model, so $K_X + \Delta$ is pseudo-effective. Then since $K_X + \Delta$ is $f$-big, we have $\nu(K_X + \Delta) \geq 2$. If $\nu(K_X + \Delta) =3$, then $K_X + \Delta$ is big, so we are done. We assume $\nu(K_X + \Delta) = 2$.

By Corollary \ref{corp}, there exists $c>0$ such that, the sheaf $F_Y^{g*}(f_*\O_X((m+1)K_X + m\Delta)\otimes \omega_Y^{-1})$ contains a nef sub-bundle of rank at least $cm^2$ for sufficiently divisible $m$ and $g$.

Fix sufficiently divisible integers $m$ and $g$. We can assume that $(X, \frac{m}{m+1}\Delta)$ is klt, and that $K_X + \frac{m}{m+1}\Delta$ is $f$-big.  Similarly as above, we have that $K_X + \frac{m}{m+1}\Delta$ is also pseudo-effective and $\nu(K_X + \Delta) = \nu(X, K_X + \frac{m}{m+1}\Delta) = 2$. In turn we conclude that

(i) $\nu(K_X + t\Delta) = 2$ for $t\geq \frac{m}{m+1}$ by Proposition \ref{num-prop} (2).

Note that $\kappa(X, K_X + \frac{m}{m+1}\Delta) \leq \kappa(X, K_X + \Delta)$. Replacing $\Delta$ by $\frac{m}{m+1}\Delta$ and $m+1$ by $m$, we can assume that

(ii) $F_Y^{g*}(f_*\O_X(m(K_X + \Delta))\otimes \omega_Y^{-1})$ contains a nef sub-bundle $V$ of rank $r\geq 3$ on $Y^g$, and the restriction on the generic fiber of $X_{Y^g} \to Y^g$ induces a linear system $|V|_{X_{\eta^{g}}}|$, which defines a generically finite map $X_{\eta^g}  \dashrightarrow \mathbb{P}_{k(\eta^g)}^{r-1}$.

If $g(Y) >1$, then $K_Y$ is big. We will argue as in the proof of \cite[Theorem 4.1]{Zha16}. Consider the flat base change $F_Y^{g+1}: Y'=Y^{g+1} \to Y$. Let $\sigma': X' \to X\times_Y Y' $ be the normalization morphism. We get the following commutative diagram
\[\xymatrix@C=2cm{&X'\ar@/^1.2pc/[rrr]|{\sigma}\ar[r]^{\sigma'}\ar[rd]^{f'} &X\times_Y Y' \ar[rr]^{\pi}\ar[d]^{\bar{f}} &    &X\ar[d]^f\\
&     &Y'=Y^{g+1} \ar[r]^{F_{Y^g}} &Y^{g}\ar[r]^{F_{Y}^{g}}     &Y\\
} \]
where $\bar{f}, \pi$ denote the natural projections, $\sigma = \pi\circ \sigma'$ and $f'= \bar{f} \circ \sigma'$.
Let $D = m(K_X + \Delta) - f^*K_Y$. Then
$$F_{Y^g}^* V \subseteq F_{Y^g}^*F_Y^{g*}f_*\mathcal{O}_X(D) \cong \bar{f}_*\pi^* \mathcal{O}_X(D) \subseteq f'_*\mathcal{O}_{X'}(\sigma'^* \pi^{*}D) = f'_*\mathcal{O}_{X'}(\sigma^*D)$$
where the ``$\cong$'' is from \cite[chap. III Proposition 9.3]{Ha77}.
Since $\deg V \geq 0$, applying Riemann-Roch formula we get
\begin{align*}
h^0(Y', F_{Y^g}^*V\otimes \omega_{Y'}) &= \deg (F_{Y^g}^*V\otimes \omega_{Y'}) + r(1-g(Y')) + h^1(Y', F_{Y^g}^*V\otimes \omega_{Y'})\\
& \geq r(2g(Y') - 2) +  r(1-g(Y')) = r(g(Y') - 1) > 0.
\end{align*}
Then we conclude that $h^0(X', \sigma^*D + f'^*K_{Y'}) > 0$, and thus
\begin{align*}
\kappa(X, K_X + \Delta)  &=  \kappa(X, D + f^*K_Y)  \\
  &= \kappa(X', \sigma^*D + \sigma^*f^*K_{Y}) \cdots \text{by Theorem \ref{ct}}\\
  &= \kappa(X', \sigma^*D + p^{g+1} f'^*K_{Y'}) \\
  &=  \kappa(X', \sigma^*D + f'^*K_{Y'} + (p^{g+1}-1) f'^*K_{Y'}) \\
  &= \nu(\sigma^*D|_{X'_{\eta}}) + \dim Y' = 3  \cdots \text{by Proposition \ref{num-prop} (3)}.\\
\end{align*}

From now on, we assume $Y$ is an elliptic curve. Then $\omega_{X} = \omega_{X/Y}$.
First we prove that
\begin{lem}\label{cl}
If there exists $L \in \Pic^0(Y)$ such that $\kappa(X, k(K_X + \Delta) + f^*L) \geq 1$ for some $k \in \Z^{>0}$, then $\kappa(X, K_X + \Delta) \geq 2$.
\end{lem}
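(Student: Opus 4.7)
My plan combines three pieces of information: the twist hypothesis $\kappa(X, k(K_X+\Delta) + f^*L) \geq 1$, the $f$-bigness of $K_X+\Delta$ (which forces $f_*\O_X(N(K_X+\Delta))$ to have generic rank $\geq cN^2$), and the nef subsheaf $V$ of rank $r\geq 3$ from assumption (ii) whose restriction to the generic fibre is generically finite. Writing $D := K_X+\Delta$, the strategy is to pass to an isogeny cover of $Y$ via Proposition \ref{prop:decomp} and extract quadratic growth of $h^0(X,ND)$ by crossing a ``horizontal'' linear family of sections (from the hypothesis) with the $r\geq 3$ ``vertical'' directions provided by $V$.

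First, apply Proposition \ref{prop:decomp} to $V$ on the elliptic curve $Y^g$ to obtain an isogeny $\pi: Y' \to Y^g$ from another elliptic curve with $\pi^*V = \bigoplus_{i=1}^r M_i$ a direct sum of nef line bundles. Let $\phi := F_Y^g\circ\pi : Y'\to Y$ and let $f': X' \to Y'$ be the normalization of the base change $X\times_Y Y'$, with induced finite map $\sigma: X' \to X$. By Theorem \ref{ct}, $\kappa(X,D) = \kappa(X',\sigma^*D)$, so it suffices to show $\kappa(X',\sigma^*D) \geq 2$, and the hypothesis pulls back to $\kappa(X', k\sigma^*D + (f')^*\pi^*L) \geq 1$.

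Next I would split into cases on the degrees of the $M_i$. If some $\deg M_{i_0} > 0$, then $h^0(Y', M_{i_0}^{\otimes t}) \geq t\deg M_{i_0}$; combining the inclusion $M_{i_0}\hookrightarrow \pi^*V \hookrightarrow (f')_*\O_{X'}(m\sigma^*D)$ with the multiplication maps $(f')_*\O_{X'}(m\sigma^*D)^{\otimes t} \to (f')_*\O_{X'}(tm\sigma^*D)$ produces linearly many sections on $X'$ of $tm\sigma^*D$ along the base. Crossed with the rank-$r$ generically finite restriction of $V$ on the generic fibre and the $f'$-bigness on the surface fibres, one obtains $h^0(X',tm\sigma^*D)\geq ct^2$, hence $\kappa\geq 2$; notably, this subcase does not use the twist hypothesis.

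The hard case, and the main obstacle, is the degenerate subcase where every $\deg M_i=0$: then $\pi^*V$ is flat on $Y'$ and produces only finitely many global sections on its own. Here the twist hypothesis is essential. My plan is to match the linearly growing $h^0(Y',(f')_*\O_{X'}(Nk\sigma^*D) \otimes (\pi^*L)^N)$ against the Atiyah decomposition of the underlying pushforward $(f')_*\O_{X'}(Nk\sigma^*D)$ on the elliptic curve $Y'$. The rank-$\geq 3$ flat part coming from the summands $M_i$, together with Lemma \ref{inj-pic} and injectivity of $\Pic^0$-pullback, should force at least one indecomposable degree-$0$ summand of the pushforward to correspond to a torsion line bundle on $Y'$. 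Such a torsion summand yields an effective representative of a positive multiple of $\sigma^*D$, which combined with the $r\geq 3$-dimensional fibre direction from $V$ promotes linear into quadratic growth of $h^0(X',Nm\sigma^*D)$, giving $\kappa(X',\sigma^*D)\geq 2$ and hence the lemma.
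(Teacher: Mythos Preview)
Your proposal does not match the paper's proof and has a genuine gap in the decisive case.

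The paper's proof of Lemma~\ref{cl} does not touch the sub-bundle $V$ at all. Instead it takes the Iitaka fibration $I:X\dashrightarrow Z$ of the twisted divisor $k(K_X+\Delta)+f^*L$, resolves it to a morphism, runs an MMP over $Z$ to reach a relative minimal model $(X'',\Delta'')\to Z$, and then restricts to the generic fibre $G''$. Abundance for log canonical surfaces and curves (Theorem~\ref{abd-surf}) makes $(K_{X''}+\Delta'')|_{G''}$ semi-ample; comparing with the numerical constraint $\nu(K_X+\Delta)=2$ via Proposition~\ref{num-prop} forces $f''^*L$ to be $\Q$-trivial on a suitable curve dominating $Y$, whence $L\sim_{\Q}0$ by Lemma~\ref{inj-pic}. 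Once $L$ is torsion the conclusion is immediate (in the $\dim Z=1$ case one more application of Proposition~\ref{num-prop}(3) gives $\kappa=2$).

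Your plan instead recycles the decomposition of $V$ from Steps~1--4 of the main proof, which in the paper are the steps that \emph{use} Lemma~\ref{cl} rather than prove it. More importantly, the only substantive case in your outline---all $\deg M_i=0$---is not an argument. You assert that the linear growth of $h^0\big(Y',(f')_*\O_{X'}(Nk\sigma^*D)\otimes(\phi^*L)^N\big)$ ``should force'' a torsion degree-$0$ summand, but you give no mechanism linking the Atiyah decomposition of $(f')_*\O_{X'}(Nk\sigma^*D)$ for varying $N$ to the fixed line bundles $M_i\subset (f')_*\O_{X'}(m\sigma^*D)$; these sit inside different sheaves. Even if one $M_i$ were torsion, that yields only a single effective representative of $m\sigma^*D$, i.e.\ $\kappa\ge0$; your ``promotion to quadratic growth via the $r\ge3$ fibre directions'' is asserted, not proved, and in fact the paper needs a delicate adjunction argument on log canonical centres (Step~4) to squeeze out \emph{two} torsion twists from $V$. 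The missing idea is precisely the geometric one: use the Iitaka fibration supplied by the hypothesis, apply surface abundance on its generic fibre, and deduce that $L$ itself is torsion.
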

\begin{proof}
Denote by $I: X \dashrightarrow Z$ the Iitaka fibration of $k(K_X + \Delta) + f^*L$, where $Z$ is a normal surface or a curve.
We have a blowing up map $\mu: X'\to X$ such that, there exists a morphism $g':X'\to Z$ factoring through $I$.
We can write that $K_{X'} + \Delta'= \mu^*(K_X + \Delta) + E$ where $\Delta'$ and $E$ are effective divisors having no common components. Then $(X', \Delta')$ is klt. Let $(X'', \Delta'')$ be a minimal model of $(X', \Delta')$ over $Z$ and denote by $g'': X'' \to Z$ the natural morphism. By Theorem \ref{thm:mmp}, there is a morphism $f'': X'' \to Y$ fitting into the following commutative diagram
$$\centerline{\xymatrix{&X\ar[rd]_{f} &X'\ar[l]_{\mu}\ar@{.>}[r]\ar[d]\ar@/^2pc/[rr]^{g'} &X''\ar[dl]^{f''}\ar[r]^{g''} &Z \\
& &Y & &}}$$

Let $G'', G'$ denote the generic fiber of $g'', g'$ respectively. Then

(a) $G''$ is normal and $(G'', \Delta''|_{G''})$ is klt, so $(K_{X''} + \Delta'')|_{G''} \sim_{\Q} K_{G''} + \Delta''|_{G''}$ is semi-ample by Theorem \ref{abd-surf};

(b) since $\kappa(X, k(K_X + \Delta) + f^*L) \geq 1$, applying Theorem \ref{ct} we have
\begin{align*}
\kappa(G'', (k(K_{X''} + \Delta'') + f''^*L)|_{G''}) &= \kappa(G', (k(K_{X'} + \Delta') + \mu^*f^*L)|_{G'}) \\
&= \kappa(G', \mu^*(k(K_X + \Delta) + f^*L)|_{G'}) \geq 0;
\end{align*}

(c) by Proposition \ref{num-prop} (4), we have
$$\nu(k(K_{X''} + \Delta'') + f''^*L) = \nu(K_{X''} + \Delta'') = \nu(K_X + \Delta) = 2.$$

If $\dim Z = 2$, then $(k(K_{X''} + \Delta'') + f''^*L)|_{G''} \sim_{num} 0$, because otherwise it will be big, and then $k(K_{X''} + \Delta'') + f''^*L$ is big by Proposition \ref{num-prop} (3), which contradicts (c) above.  So from (a) and (b), it follows that
$$(k(K_{X''} + \Delta'') + f''^*L)|_{G''} \sim_{\Q} 0 \sim_{\Q} (K_{X''} + \Delta'')|_{G''},$$
thus $f''^*L|_{G''} \sim_{\Q} 0$.
On the other hand, since $K_{X''} + \Delta''$ is big over $Y$,
$G''$ is dominant over $Y$.
Take a general fiber $G_1''$ of $g''$. Then $G_1''$ is dominant over $Y$ and  $f''^*L|_{G_1''} \sim_{\Q} 0$.
Applying Corollary \ref{inj-pic} to the morphism $G_1'' \to Y$ gives that $L \sim_{\Q} 0$. So we are done in this case.

If $\dim Z = 1$, applying Proposition \ref{num-prop} (3), similarly as in the previous case we see that
$$\nu((K_{X''} + \Delta'')|_{G''}) = \nu((k(K_{X''} + \Delta'') + f''^*L)|_{G''}) = 1,$$
so $G''$ is dominant over $Y$.
Denote by $h'': G'' \to C$ the Iitaka fibration of $(K_{X''} + \Delta'')|_{G''}$, and by $H''$ a general fiber of $h''$. Then $(K_{X''} + \Delta'')|_{H''} \sim_{\Q} 0$, and $H''$ is dominant over $Y$ since $K_{X''} + \Delta''$ is big over $Y$.  By (b), we conclude that $(k(K_{X''} + \Delta'') + f''^*L)|_{H''} \sim_{\Q} 0$, so $f''^*L|_{H''} \sim_{\Q} 0$, and in turn $L \sim_{\Q} 0$ by Lemma \ref{inj-pic}.
Applying \ref{num-prop} (3), we conclude that
$$\kappa(X, K_X + \Delta) = \kappa(X'', K_{X''} + \Delta'') = \kappa(G'', (K_{X''} + \Delta'')|_{G''}) + 1 = 2.$$

In conclusion, we complete the proof of this lemma.
\end{proof}

Now let's proceed with the proof.

\underline{Step 1:}
Since $Y$ is a normal curve, we have that $f$ is separable by \cite[Lemma 7.2]{Bad01}, so the geometric generic fiber is integral.

By Proposition \ref{prop:decomp} there exists a flat base change between two elliptic curves $\pi': Y_1 \rightarrow Y^g$ such that $\pi'^*V = \oplus_{i=1}^n \mathcal{L}_i$. Let $\pi = F_Y^g\circ \pi': Y_1 \to Y^g \to Y$.
And let $X_1$ be the normalization of $X\times_Y Y_1$. We get the following commutative diagram
$$\centerline{\xymatrix{ &X_1\ar[d]_{f_1}\ar[rr]^{\pi_1} & &X\ar[d]^f  \\ &Y_1\ar@/_1pc/[rr]|{\pi}\ar[r]^{\pi'} &Y^g\ar[r]^{F_Y^g}  &Y}}$$
where $\pi_1$ and $f_1$ denote the natural projections.
We have that $\pi^*f_*\O_X(m(K_X + \Delta)) \subset f_{1*}\O_{X_1}(\pi_1^*m(K_X + \Delta))$ by \cite[chap. III Proposition 9.3]{Ha77}, thus
$$\pi'^*V = \oplus_{i=1}^r \mathcal{L}_i \subset f_{1*}\O_{X_1}(\pi_1^*m(K_X + \Delta)).$$

If $\deg V > 0$, up to an \'{e}tale base change, we can assume $\deg V > 1$, thus $h^0(X_1, \pi_1^*m(K_X + \Delta)) \geq 2$ by Riemann-Roch formula. Applying Theorem \ref{ct}, we have
$$\kappa(X, K_X + \Delta) =  \kappa(X_1, \pi_1^*(K_X + \Delta)) \geq 1,$$
which implies the proof by Lemma \ref{cl}.

In the following we assume $\deg V = 0$. So by Theorem \ref{thm:facts on vb on ell curve} and \ref{thm:fact on vb on sm curve}, we have that $\mathcal{L}_i \in \Pic^0(Y_1)$.

\underline{Step 2:} By Step 1, we have
$$h^0(X_1, \pi_1^*m(K_X + \Delta) - f_1^*\mathcal{L}_i) \geq 1, ~\mathrm{thus}~\kappa(X_1, \pi_1^*m(K_X + \Delta) - f_1^*\mathcal{L}_i) \geq 0,$$
and if $\mathcal{L}_i = \mathcal{L}_j$ for some $j \neq i$ then the strict inequalities hold.
Since $\pi^*: \mathrm{Pic}^0(Y) \to \mathrm{Pic}^0(Y_1)$ is surjective, there exist $L_i$ such that $\mathcal{L}_i \sim \pi^*L_i$, thus
$$\pi_1^*m(K_X + \Delta) - f_1^*\mathcal{L}_i \sim \pi_1^*(m(K_X + \Delta) - f^*L_i).$$
By Theorem \ref{ct} and Lemma \ref{cl}, we can  assume $\kappa(X, m(K_X + \Delta) - f^*L_i) = 0$.

In the following we only need to show that at least two of $L_i$ are torsion, because then $h^0(X, l(m(K_X + \Delta) - f^*L_i) \geq 2$ for sufficiently divisible $l$,
so we are done by Lemma \ref{cl}.

Replacing $m$ and $L_i$ by their multiplications by a sufficiently divisible integer, we can assume $h^0(X, m(K_X + \Delta) - f^*L_i) = 1$ for every $i= 1,2,\cdots, r$.

\underline{Step 3:}
By Step 2, we get effective divisors
$$B_i \sim m(K_X + \Delta) - f^*L_i.$$
Take a log smooth resolution $\mu: \tilde{X} \rightarrow X$ of the pair $(X, \Delta+\sum_j B_j)$. Denote by $\tilde{f}: \tilde{X} \rightarrow Y$ the natural morphism.
Let $\tilde{B}$ be the reduced divisor with support equal to $\mathrm{supp} \mu_*^{-1}(\Delta+\sum_j B_j) \cup E(\mu)$, where $E(\mu)$ denotes the union of all the exceptional divisors.
Consider the dlt pair  $(\tilde{X}, \tilde{B})$.
There exists an effective $\Q$-divisor $\tilde{B}'$ on $\tilde{X}$ such that
$$K_{\tilde{X}} + \tilde{B} = \mu^*(K_X + \Delta)+ \tilde{B}'$$
Since $(X, \Delta)$ has a minimal model, $K_{\tilde{X}} + \tilde{B}$ has a weak Zariski decomposition.
By Theorem \ref{thm:mmp}, $(\tilde{X}, \tilde{B})$ has a minimal model $(\hat{X}, \hat{B})$ which can be assumed dlt by taking a crepant resolution (\cite[Theorem 1.6]{Bir16}), and there exists a natural morphism $\hat{f}: \hat{X} \to Y$.
By the construction above, the following are true:\\
(1) Since $(X, \Delta)$ is klt, $\mathrm{supp}\tilde{B}' \subset \mathrm{supp} \tilde{B}$.\\
(2) Since $\tilde{B}' \leq \mu^*\sum_j B_j  + t \mu^*\Delta + E \sim_{num} \mu^*k(K_X + s\Delta) + E$ for some $s,t, k>1$ and an effective $\mu$-exceptional divisor $E$, applying Proposition \ref{num-prop} (4) and (5), we conclude
$$2 \leq \nu(K_{\hat{X}} + \hat{B}) = \nu(K_{\tilde{X}} + \tilde{B}) \leq  \nu(\mu^*(k+1)(K_X + s\Delta) + E) = \nu(\mu^*(K_X + s\Delta)) = 2,$$
thus the two ``$\leq$'' above are in fact ``$=$''.\\
(3) For a sufficiently divisible integer $M > 0$ and $0 \leq i \leq r$, we have effective Cartier divisors
\begin{equation*}
\begin{split}
\tilde{B}_i = M\mu^*B_i +  Mm\tilde{B}'   & \sim M(m\mu^*(K_X + \Delta) -\tilde{f}^*L_i) + Mm\tilde{B}' \\
                                          & \sim Mm(K_{\tilde{X}} + \tilde{B}) - M\tilde{f}^*L_i.
\end{split}
\end{equation*}
Pushing forward via $\tilde{X} \dashrightarrow \hat{X}$, we get effective divisors on $\hat{X}$
$$\hat{B}_i \sim Mm(K_{\hat{X}} + \hat{B}) - M\hat{f}^*L_i.$$
\\
(4) Considering the restriction of $\hat{B}_i$ on the generic fiber $\hat{X}_{\eta}$ of $\hat{f}$, we get a linear system $|\hat{V}| \subset |Mm(K_{\hat{X}} + \hat{B})_{\eta}|$, which defines a generically finite map $\hat{X}_{\eta} \dashrightarrow \mathbb{P}_{k(\eta)}^{r-1}$ by assumption (ii) and the construction above.
Let $\hat{C}_{\eta}$ be the fixed part of $|\hat{V}|$, and let $\hat{A}_{i, \eta}  = \hat{B}_{i, \eta} - \hat{C}_{\eta}$. Then $\hat{A}_{i, \eta}$ are nef and big divisors on $\hat{X}_{\eta}$ which are linearly equivalent to each other. Since $(K_{\hat{X}} + \hat{B})_{\eta}$ is nef and big, we have $(K_{\hat{X}} + \hat{B})_{\eta} \cdot \hat{A}_{i, \eta} >0$. Take a reduced and irreducible component $\hat{G}_{1, \eta}$ of $\hat{A}_{1, \eta}$ such that $(K_{\hat{X}} + \hat{B})_{\eta} \cdot \hat{G}_{1, \eta} >0$. Then there exists a divisor among the $\hat{A}_{i, \eta}$, say, $\hat{A}_{2, \eta}$ not containing $\hat{G}_{1, \eta}$. So by $(K_{\hat{X}} + \hat{B})_{\eta} \cdot \hat{A}_{1, \eta} = (K_{\hat{X}} + \hat{B})_{\eta} \cdot \hat{A}_{2, \eta}$, there exists another reduced and irreducible component $\hat{G}_{2, \eta}$ of $\hat{A}_{2, \eta}$ such that $(K_{\hat{X}} + \hat{B})_{\eta} \cdot \hat{G}_{2, \eta} >0$, and that the coefficient of $\hat{G}_{2, \eta}$ in $\hat{A}_{2, \eta}$ is bigger than that in $\hat{A}_{1, \eta}$. 
In turn we get two reduced and irreducible components $\hat{G}_{1}, \hat{G}_{2}$ of $\hat{B}_{1}, \hat{B}_{2}$, whose restrictions on $\hat{X}_{\eta}$ coincide with $\hat{G}_{1, \eta}, \hat{G}_{2, \eta}$ respectively.
If writing that
$$(\clubsuit)~~~~\hat{B}_1 = a_{11}\hat{G}_1 + a_{12}\hat{G}_2 + \hat{G}'_3~\mathrm{and}~ \hat{B}_2 = a_{21}\hat{G}_1 + a_{22}\hat{G}_2 + \hat{G}''_3$$
where  $\hat{G}'_3, \hat{G}''_3 \geq 0$ contain none of $\hat{G}_1, \hat{G}_2$, then the integers $a_{11} >a_{21} \geq 0$ and $a_{22} > a_{12} \geq 0$.

\underline{Step 4:} By the construction of Step 3 (3), we have $\mathrm{supp}\tilde{B}_i \subset \mathrm{supp}\tilde{B}$, thus $\hat{G}_1, \hat{G}_2$ are components of $\hat{B}$. Write that
$$\hat{B} = \hat{G}_1 + \hat{G}'_4.$$
where $\hat{G}'_4$ is an effective divisor.

Since char $k >5$ and $(\hat{X}, \hat{B})$ is dlt, we have that $\hat{G}_1$ is a normal projective surface (\cite[Lemma 5.2]{Bir16}), and the pair $(\hat{G}_1,  \hat{G}'_4|_{\hat{G}_1})$ is lc. Applying Theorem \ref{abd-surf}, we have
$$(K_{\hat{X}} + \hat{B})|_{\hat{G}_1} \sim_{\mathbb{Q}} K_{\hat{G}_1} + \hat{G}'_4|_{\hat{G}_1}$$
is semi-ample. By the construction in Step 3 (4), $(K_{\hat{X}} + \hat{B})|_{\hat{G}_1}$ is relatively big over $Y$.
Applying Proposition \ref{num-prop} (1), we conclude that
$$\nu((K_{\hat{X}} + \hat{B})|_{\hat{G}_1}) = 1.$$
Considering the Iitaka fibration of $(K_{\hat{X}} + \hat{B})|_{\hat{G}_1}$, we get a morphism
$h_1: \hat{G}_1 \to C_1$ to a curve, with every fiber being dominant over $Y$.

Denote by $H_1$ a general fiber of $h_1$. Then $p_a(H_1) \geq 1$. By adjunction
$$0 \sim_{\Q} (K_{\hat{G}_1} +  \hat{G}'_4)|_{H_1} \sim_{\Q} K_{H_1} +  \hat{G}'_4|_{H_1}$$
we conclude that $K_{H_1} \sim_{\Q} \hat{G}'_4|_{H_1} \sim_{\Q} 0$. Then since $\hat{G}'_4 \cap H_1 = \emptyset$, we see that
$$\hat{G}_2 \cap H_1 = \hat{G}'_3 \cap H_1 = \hat{G}''_3 \cap H_1 = \emptyset.$$
Therefore, by $(K_{\hat{X}} + \hat{B})|_{H_1} \sim_{\Q} 0$ and ($\clubsuit$) of Step 3 (4) we have
\begin{equation}
\begin{split}
-a_{21}M\hat{f}^*L_1|_{H_1} &\sim_{\mathbb{Q}} a_{21}(Mm(K_{\hat{X}} + \hat{B}) - M\hat{f}^*L_1)|_{H_1}\\
&\sim_{\mathbb{Q}} a_{21}\hat{B}_1|_{H_1} \sim_{\mathbb{Q}} a_{11}a_{21}\hat{G}_1|_{H_1}\\
&\sim_{\mathbb{Q}} a_{11}\hat{B}_2|_{H_1} \sim_{\mathbb{Q}} -a_{11}M\hat{f}^*L_2|_{H_1}
\end{split}
\end{equation}
which, by Lemma \ref{inj-pic}, implies that
$$a_{21}ML_1 \sim_{\mathbb{Q}} a_{11}ML_2.$$

Applying similar arguments on $\hat{G}_2$ gives
$$a_{22}ML_1 \sim_{\mathbb{Q}} a_{12}ML_2.$$
Finally by the conditions $a_{11} > a_{21}\geq 0$ and $0\leq a_{12}<a_{22}$, we conclude that $L_1 \sim_{\mathbb{Q}} L_2 \sim_{\mathbb{Q}} 0$ and complete the proof.

\end{document}